\newtheorem{theorem}{Theorem}[section]
\newtheorem{proposition}[theorem]{Proposition}
\newtheorem{lemma}[theorem]{Lemma}
\newtheorem{corollary}[theorem]{Corollary}
\theoremstyle{remark} 
\newtheorem{remark}[theorem]{Remark}
\theoremstyle{definition} \newtheorem{definition}[theorem]{Definition}
\newtheorem{example}[theorem]{Example}
\newcommand{\Hom}[3][]{\ensuremath{\mathrm{Hom}_{#1} (#2, #3)}}
\newcommand{\Aut}[2][]{\ensuremath{\mathrm{Aut}_{#1} (#2)}}
\newcommand{\id}[1]{\ensuremath{\mathbf{1}_{#1}}}
\newcommand{\set}[1]{\ensuremath{\{ #1 \}}}
\newcommand{\suchthat}{\ensuremath{\, \vert \,}}
\newcommand{\N}{\ensuremath{\mathbf{N}}}
\newcommand{\Z}{\ensuremath{\mathbf{Z}}}
\newcommand{\R}{\ensuremath{\mathbf{R}}}
\newcommand{\C}{\ensuremath{\mathbf{C}}}
\renewcommand{\bar}[1]{\ensuremath{\overline{#1}}}
\newcommand{\units}[1]{\ensuremath{#1^\times}}
\renewcommand{\dim}[2][]{\ensuremath{\mathrm{dim}_{#1}(#2)}}
\newcommand{\Repr}[2][]{\ensuremath{\mathbf{Rep}_{#1}(#2)}}
\newcommand{\pr}[1]{\ensuremath{\mathrm{pr}_{#1}}}
\newcommand{\card}[1]{\ensuremath{\mathrm{card}(#1)}}
\newcommand{\struct}[1]{\ensuremath{\mathcal{O}_{#1}}}
\newcommand{\rk}[2][]{\ensuremath{\mathrm{rk}_{#1}(#2)}}
\newcommand{\restrict}[2]{\ensuremath{#1\vert_{#2}}}
\newcommand{\sr}{\ensuremath{\mathrm{s}}}
\newcommand{\tg}{\ensuremath{\mathrm{t}}}
\newcommand{\stab}[1]{\ensuremath{#1^{\mathrm{s}}}}
\newcommand{\sstab}[1]{\ensuremath{#1^{\mathrm{ss}}}}
\definecolor{dgreen}{rgb}{0,0.5,0}
\definecolor{dbrown}{rgb}{0.4, 0.26, 0.13}
\begin{document}

\title[Tensor product of quiver representations]{Tensor product of representations of quivers}

\author{Pradeep Das}
\email{pradeepdas0411@gmail.com}
\address{Rajiv Gandhi Institute of Petroleum Technology, Jais\\ Harbanshganj \\ Amethi 229304 \\ India}
\author{Umesh V.~Dubey and N.~Raghavendra}
%
%
%
\email{umeshdubey@hri.res.in, raghu@hri.res.in}
\address{Harish-Chandra Research Institute \\  A CI of Homi Bhabha National
Institute \\ Chhatnag Road \\ Jhunsi \\ Prayagraj - 211019 \\ India}
\keywords{Representations of quivers, semistability, moduli spaces, tensor product, universal family, natural line bundle}

\begin{abstract}
In this article, we define the tensor product $V\otimes W$ of a
representation $V$ of a quiver $Q$ with a representation $W$ of an
another quiver $Q'$, and show that the representation $V\otimes W$ is
semistable if $V$ and $W$ are semistable.  We give a relation between
the universal representations on the fine moduli spaces $N_1, N_2$ and
$N_3$ of representations of $Q, Q'$ and $Q\otimes Q'$ respectively
over arbitrary algebraically closed fields. We further describe a
relation between the natural line bundles on these moduli spaces when
the base is the field of complex numbers.  We then prove that the
internal product $\tilde{Q}\otimes \tilde{Q'}$ of covering quivers is
a sub-quiver of the covering quiver $\widetilde{Q\otimes Q'}$.  We
deduce the relation between stability of the representations
$\widetilde{V\otimes W}$ and $\tilde{V} \otimes \tilde{W}$, where
$\tilde{V}$ denotes the lift of the representation $V$ of $Q$ to the
covering quiver $\tilde{Q}$.  We also lift the relation between the
natural line bundles on the product of moduli spaces
$\tilde{N_1} \times \tilde{N_2}$. 
 
\end{abstract}

\maketitle


\section*{Introduction}
Whenever the tensor product of two objects exists in a category on
which there is a notion of semistability, it is natural to ask whether
the tensor product of two semistable objects is again semistable.  In
the case of holomorphic vector bundles over a compact Riemann surface,
it follows from a theorem of Narasimhan and Seshadri \cite{NS} that the
tensor product of semistable vector bundles is semistable.  To every
filtered complex vector space, Faltings and W\"ustholz \cite{FW}
associated a holomorphic vector bundle over a compact Riemann
surface, and using the result of \cite{NS} showed that the tensor
product of semistable filtered complex vector spaces is semistable. 
Totaro \cite{Tot} introduced a certain kind of metric called
\emph{good metric} on a filtered vector space over the field of
complex numbers and showed that the polystability of a filtered
vector space is equivalent to the existence of a good metric on the
vector space, and using this he gave an alternate proof of
Faltings' result.

The theory of quivers and their representations is important in the
study of representation theory.  In order to classify finite
dimensional modules over a finite dimensional algebra, King \cite{Kin}
translated this problem into classifying the representations of a
certain quiver and constructed the moduli spaces in question.  These
moduli spaces are closely related to the moduli spaces of vector
bundles (see for example, \cite{Hos}).

In this paper, we recall the notion of internal product $Q \otimes Q'$
of two quivers $Q$ and $Q'$ as defined by Li and Lin in \cite{LL}, and
by Keller in \cite{Kel}, and then we define the tensor product
$V \otimes W$ of a representation $V$ of $Q$ with a representation
$W$ of $Q'$.  We then show that the semistability of $V$ and $W$
implies the semistability of $V \otimes W$ over arbitrary fields.  As
a consequence we can get the similar result for geometrically stable
representations.  When the base field is $\C$ we give another proof of
the semistability of $V \otimes W$.  This alternate proof is similar
to the proof of semistability of tensor product of semistable filtered
vector spaces by Totaro.  In our case, the role of the good metric is
played by certain kind of Hermitian metrics on the representations
which we call \emph{Einstein-Hermitian metrics}.  An
Einstein-Hermitian metric on a representation of a quiver is a
solution of a certain differential equation called a \emph{quiver
vortex equation}.  The relation between stability and the solutions of
such equations has been studied by \'Alvarez-C\'onsul and Garc\'ia-Prada
in the general context of ``twisted quiver bundles" in \cite{CP}. 

A consequence of King's construction of the moduli spaces of quiver
representations is that they are quasi projective varieties.  In
particular, when the base field is that of the complex numbers, there
exists a positive Hermitian holomorphic line bundle on each such moduli
space.  In \cite{DMR, PD} the authors have constructed such a natural
line bundle on the moduli spaces of stable and semistable
representations of quivers respectively.  In this paper we give a
relation between such natural line bundles on the moduli spaces of
$Q, Q'$ and $Q \otimes Q'$ when the base field is $\C$.  Furthermore,
we also describe a relation between the universal families of
representations on the fine moduli spaces of representations of $Q, Q'$
and $Q \otimes Q'$ over arbitrary algebraically closed fields.  Note
that, here we are associating to an object of the product category
$\Repr[k]{Q} \times \Repr[k]{Q'}$ an object of the category
$\Repr[k]{Q \otimes Q'}$, where $\Repr[k]{Q}$ denotes the category of
representations of the quiver $Q$ over the field $k$.

The fixed loci of torus action on a quiver moduli space can be described
using the Bia\l{}ynicki-Birula decomposition and these connected
components can be  realised as moduli spaces of covering quivers, see
\cite{Wei, Fra}.  We compare the semi-stability under tensor product
construction with the (abelian) covering quiver construction and observe
that these two constructions do not commute in general.  Any
representation $V$ of a quiver lifts to a representation
$\widetilde{V}$ of the covering quiver and becomes (semi-) stable
w.r.t. a suitable slope function.  As a consequence we get a map
$\tau_N$ (see Proposition \ref{prop:4.4}) from the moduli space for the 
covering quiver $\tilde{Q}$ to the moduli space for the original quiver
$Q$.  We also find some relations between stability conditions of
$\widetilde{V} \otimes \widetilde{W}$ and $\widetilde{(V \otimes W)}$
where $V$ and $W$ are representations of the quivers $Q$ and $Q'$
respectively.  We further prove the descent of the natural line bundle
similar to the descent under the tensor product map $\bar{\phi}$.  We
combine these results with the map given by the tensor product to get
a commutative diagram (see Corollary \ref{cor:4.7}) and then use it to
give a relation between the natural line bundles on the moduli spaces of
representations of covering quivers and their internal product.

In Section \ref{sec:1}, we recall the basics on quivers and their
representations, and describe the moduli space of representations of
a quiver as constructed by King and mention some of its properties.
In Section \ref{sec:2}, we define the tensor product of two
representations and show that the tensor product of semistables is
semistable.  In Section \ref{sec:3}, we describe the relations between
the natural line bundles, and between the universal families of
representations on the fine moduli spaces of the quivers involved.  In
Section \ref{sec:4}, we recall the notion of (universal) covering quiver
of a quiver with a fixed dimension vector. We give the relation between
the internal product of quivers with the internal product of (universal)
covering quivers. This is used to get the relation between stability of
tensor product of representations on the (universal) covering quiver. 
We also deduce the description of the pull back of the natural line
bundle on the moduli of internal product of covering
quivers in terms of suitable powers of the natural line bundles on the
individual quiver moduli spaces.

\section{The moduli space of quiver representations}
	\label{sec:1}

\subsection{Quivers and their representations}
	\label{sec:1.1}

A \emph{quiver} is a quadruple $Q = (Q_0,Q_1,\sr,\tg)$ consisting of two
finite sets $Q_0 \neq \emptyset$ and $Q_1$ called the set of vertices
and the set of arrows respectively, and two functions
$\sr, \tg : Q_1\to Q_0$ called the source map and the target map
respectively.

Let $k$ be any field.

A \emph{representation of $Q$ over $k$} is a pair $(V,\rho)$, where
$V=(V_a)_{a\in Q_0}$ is a family of finite-dimensional $k$-vector
spaces and $\rho=(\rho_\alpha)_{\alpha\in Q_1}$ is a family of
$k$-linear maps
\begin{equation*}
\rho_\alpha : V_{\sr(\alpha)} \to V_{\tg(\alpha)}.
\end{equation*}
Let $(V,\rho)$ and $(W,\sigma)$ be two representations of $Q$.  A
\emph{morphism from $(V,\rho)$ to $(W,\sigma)$} is a family
$f=(f_a)_{a\in Q_0}$ of $k$-linear maps $f_a : V_a \to W_a$ satisfying
$\sigma_\alpha \circ f_{\sr(\alpha)} = f_{\tg(\alpha)} \circ
\rho_\alpha$ for all $\alpha\in Q_1$.  The morphism $f$ is an
\emph{isomorphism} if each $f_a$ is an isomorphism.
 
\subsection{Stability and family of representations}
	\label{sec:1.2}
	
Let $(V,\rho)$ be a representation of $Q$.  The \emph{type} or the
\emph{dimension vector} of $(V,\rho)$ is the element
$(\dim[k]{V_a})_{a\in Q_0}$ of $\N^{Q_0}$, denoted $\dim{V,\rho}$.
The \emph{rank} of $(V,\rho)$ is the natural number 
$\sum_{a\in Q_0}\dim[k]{V_a}$, denoted $\rk{V,\rho}$.
A \emph{weight} of $Q$ is an element $\theta$ of the $\R$-vector
space $\R^{Q_0}$.
The \emph{$\theta$-degree} of $(V,\rho)$ is the real number
\begin{equation*}
   \label{eq:70}
\deg_\theta(V,\rho) = \sum_{a\in Q_0}\theta_a \dim[k]{V_a}.
\end{equation*}
If $(V,\rho)$ is non-zero, the \emph{$\theta$-slope} of $(V,\rho)$ is
the real number
\begin{equation*}
   \label{eq:71}
\mu_\theta(V,\rho) = \frac{\deg_\theta(V,\rho)}{\rk{V,\rho}}.
\end{equation*}
More generally, for any $d \in \N^{Q_0}\setminus \set{0}$, we define
$\rk{d} = \sum_{a \in Q_0} d_a,
\deg_\theta(d) = \sum_{a \in Q_0}\theta_a d_a$
and $\mu_\theta(d) = \frac{\deg_\theta(d)}{\rk{d}}$.

A non-zero representation $(V,\rho)$ is called
\emph{$\theta$-semistable} (respectively, \emph{$\theta$-stable}) if
\begin{equation*}
   \label{eq:72}
\mu_\theta(W,\sigma) \leq \mu_\theta(V,\rho) \quad
(\text{respectively,}~ \mu_\theta(W,\sigma) < \mu_\theta(V,\rho))
\end{equation*}
for every non-zero proper subrepresentation $(W,\sigma)$ of $(V,\rho)$.
The representation $(V,\rho)$ is called \emph{$\theta$-polystable} if
it is $\theta$-semistable, and if it is a direct sum of a finite family
of $\theta$-stable representations of $Q$ each having slope same as that
of $(V,\rho)$.


Let $d=(d_a)_{a \in Q_0}$ be an element of $\N^{Q_0}$ such that
$d_a > 0$ for all $a \in Q_0$.  A \emph{family} of $\theta$-semistable
representations of type $d$ parametrised by a $k$-scheme $T$ is a pair
$(V,\rho)$, where $V=(V_a)_{a\in Q_0}$ is a family of locally free
$\struct{T}$-modules, $\rho=(\rho_\alpha)_{\alpha\in Q_1}$ is a family
of morphisms $\rho_\alpha : V_{\sr(\alpha)} \to V_{\tg(\alpha)}$ of
$\struct{T}$-modules and for every closed point $t$ of $T$, the
representation $(V(t),\rho(t))$ of $Q$ over $k$ is $\theta$-semistable
and has type $d$.

We have an obvious notion of \emph{morphism} between two families
parametrised by $T$ which generalises the morphism of representations 
of a quiver as defined earlier.  Two families $(V,\rho)$ and
$(W,\sigma)$ parametrised by $T$ are called \emph{equivalent} if there
is an invertible $\struct{T}$-module $L$ such that
$(L\otimes_{\struct{T}} V,\id{L}\otimes \rho)$ is isomorphic to
$(W,\sigma)$.

\subsection{The moduli of $\theta$-semistable representations of type $d$} 
	\label{sec:1.3}
Let $\theta$ be a rational weight of $Q$ and $d \in \N^{Q_0}$ be fixed,
and let $k$ be an algebraically closed field.  The
\emph{moduli functor of type $(\theta,d)$} of $Q$ is the presheaf
$F(\theta,d)$ on the category of $k$-schemes, which is defined by
\begin{align*}
F(\theta,d) (T) = \;
&\text{set of equivalence classes of families of $\theta$-semistable}\\
&\text{representations of type $d$ parametrised by $T$}.
\end{align*}

\begin{proposition}\cite[Proposition 5.2]{Kin}
	\label{prop:king}
There exists a connected normal quasi-projective $k$-scheme
$M(\theta,d)$, which universally corepresents the functor
$F(\theta,d)$.
\end{proposition}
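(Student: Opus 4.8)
The plan is to follow King's construction of these moduli spaces via Geometric Invariant Theory. Fix the dimension vector $d$, put $V_a = k^{d_a}$ for $a \in Q_0$, and form the affine space
\[
R = R(Q,d) = \bigoplus_{\alpha \in Q_1} \Hom[k]{V_{\sr(\alpha)}}{V_{\tg(\alpha)}},
\]
whose points are the representations of $Q$ of type $d$ with the fixed underlying spaces $(V_a)$. The reductive group $G = \prod_{a \in Q_0} \mathrm{GL}(V_a)$ acts on $R$ by base change, $(g \cdot \rho)_\alpha = g_{\tg(\alpha)}\, \rho_\alpha\, g_{\sr(\alpha)}^{-1}$, and its orbits are exactly the isomorphism classes of representations of type $d$; the central torus $\Delta = \set{(\lambda\, \id{V_a})_{a \in Q_0} \suchthat \lambda \in \units{k}}$ acts trivially.

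Next I would recast $\theta$-semistability as a GIT stability condition. Since $\mu_\theta$ is unaffected by scaling $\theta$ by a positive rational or by shifting it by a constant vector, I may replace $\theta$ by $\rk{d}\,\theta - \deg_\theta(d)\,(1,\dots,1)$, an \emph{integral} weight with $\theta$-degree of $d$ equal to $0$ which defines the same semistable objects; so assume henceforth $\theta \in \Z^{Q_0}$ and $\deg_\theta(d) = 0$. Then $\chi_\theta(g) = \prod_{a \in Q_0} \det(g_a)^{-\theta_a}$ is a character of $G$, trivial on $\Delta$, hence a $G$-linearization of the trivial line bundle on $R$. The crucial step --- King's interpretation of the Hilbert--Mumford criterion --- is that $\rho \in R$ is GIT-semistable (resp.\ GIT-stable modulo $\Delta$) for $\chi_\theta$ exactly when $(V,\rho)$ is $\theta$-semistable (resp.\ $\theta$-stable): one-parameter subgroups of $G$ correspond to $\Z$-gradings of the $V_a$ compatible with the arrows, that is, to filtrations of $(V,\rho)$ by subrepresentations, and the Hilbert--Mumford weight of such a one-parameter subgroup is, up to sign and a positive factor, the $\theta$-degree of the associated subrepresentation; nonnegativity of all such weights is then precisely the family of slope inequalities in the definition of $\theta$-semistability.

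With this translation, Mumford's theory gives that the semistable locus $R^{\mathrm{ss}} \subseteq R$ is open and that the good quotient
\[
M(\theta,d) := R^{\mathrm{ss}}/\!\!/_{\chi_\theta} G
\]
exists as a quasi-projective $k$-scheme, projective over the affine quotient $\mathrm{Spec}\, k[R]^G$, and is a universal categorical quotient of $R^{\mathrm{ss}}$ by $G$. Since $R$ is an affine space it is irreducible and smooth, hence so is the open subset $R^{\mathrm{ss}}$; therefore $M(\theta,d)$ is connected, and it is normal, being (locally) a good quotient of the normal scheme $R^{\mathrm{ss}}$, because the ring of $G$-invariants of a normal domain is again normal (via the Reynolds operator, an element of $\mathrm{Frac}(k[R^{\mathrm{ss}}]^G)$ that lies in $k[R^{\mathrm{ss}}]$ already lies in $k[R^{\mathrm{ss}}]^G$, so the invariant ring is integrally closed).

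It remains to identify $M(\theta,d)$ with the functor $F(\theta,d)$, and this is the step I expect to carry the real content. One proves the \emph{local universal property}: every family of $\theta$-semistable representations of type $d$ over a $k$-scheme $T$ is, locally on $T$, the pullback of the tautological family on $R^{\mathrm{ss}}$ along a morphism $T \to R^{\mathrm{ss}}$ (choose local trivializations of the locally free sheaves $V_a$). On overlaps two such morphisms differ by the $G$-action, so the induced maps into the quotient glue to a morphism $T \to M(\theta,d)$, yielding a natural transformation that assigns to each family over $T$ a morphism $T \to M(\theta,d)$; conversely, two morphisms $T \to R^{\mathrm{ss}}$ induce equivalent families exactly when they differ by $G$, and here the equivalence of families up to twist by an invertible $\struct{T}$-module is precisely what absorbs the residual ambiguity coming from the trivially acting torus $\Delta$. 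Transporting the universal categorical quotient property of $R^{\mathrm{ss}}/\!\!/G$ through this dictionary then shows that $M(\theta,d)$ universally corepresents $F(\theta,d)$, completing the proof.
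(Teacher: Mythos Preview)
The paper does not give its own proof of this proposition: it is simply quoted from \cite[Proposition~5.2]{Kin} and then followed by a list of properties of the moduli space. Your outline is precisely King's GIT construction---realising $\theta$-semistability via the Hilbert--Mumford criterion for the character $\chi_\theta$, forming the GIT quotient $R^{\mathrm{ss}}/\!\!/_{\chi_\theta} G$, and establishing corepresentability through the local universal property---so it agrees with the cited source and there is nothing to compare against in the present paper.
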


The moduli space $M(\theta,d)$ has the following properties.
\begin{enumerate}
\item The moduli space $M(\theta,d)$ is projective if and only if $Q$
is acyclic. \label{item:1.3.1}

\item There is a canonical bijection from the set of S-equivalence
classes of $\theta$-semistable representations of $Q$, to the set
$N(\theta,d)$ of closed points of $M(\theta,d)$, where we say that two
representations are S-equivalent if they have filtrations with stable
quotients and the same associated graded representations. \label{item:1.3.2}

\item There is an open subset $\stab{M}(\theta,d)$ of $M(\theta,d)$
such that the set
\begin{equation*}
   \label{eq:6}
\stab{N}(\theta,d) = N(\theta,d) \cap \stab{M}(\theta,d)
\end{equation*}
of its closed points is the image of the set of isomorphism classes of
$\theta$-stable representations of type $d$ of $Q$, under the above
bijection. \label{item:1.3.3}

\item If $d$ is coprime, that is,
$\gcd(d_a \suchthat a \in Q_0) = 1$, 
then the moduli space $\stab{M}(\theta,d)$ of $\theta$-stable 
representations is a fine moduli space.  Therefore, there exists a 
universal family of representations $(U,\psi)$ parametrised by
$\stab{M}(\theta,d)$ (see \cite[Proposition 5.3]{Kin}). \label{item:1.3.4}

\item If $d$ is $\theta$-coprime, that is, $\mu_\theta (d') \neq
\mu_\theta (d)$ for all $0 < d' < d$, then $M(\theta,d) =
\stab{M}(\theta,d)$. \label{item:1.3.5}

\item If $d$ is coprime, then there exists a dense open subset of
$\R^{Q_0}$ such that for any weight $\theta$ in that subset $d$ is also
$\theta$-coprime. \label{item:1.3.6}

\end{enumerate}

\subsection{A description of the moduli space}
	\label{sec:1.4}

Fix a family $V=(V_a)_{a\in Q_0}$ of $k$-vector spaces, such that
$\dim[k]{V_a}=d_a$ for all $a\in Q_0$.  Let $\mathcal{A}$ denote the
\emph{representation space} of type $d$ of $Q$, that is, the
finite-dimensional $k$-vector space
$\bigoplus_{\alpha\in
  Q_1}\Hom[k]{V_{\sr(\alpha)}}{V_{\tg(\alpha)}}$.
We give the vector space $\mathcal{A}$ its usual structure of a variety.

Let $G$ denote the algebraic group $\prod_{a\in Q_0}\Aut[k]{V_a}$.
There is a canonical algebraic right action of $G$ on $\mathcal{A}$,
which is defined by
\begin{equation*}
  \label{eq:10}
  (\rho g)_\alpha =
  g_{\tg(\alpha)}^{-1} \circ \rho_\alpha \circ g_{\sr(\alpha)}
\end{equation*}
for all $\rho \in \mathcal{A}$, $g\in G$, and $\alpha\in Q_1$.  The
stabiliser of any point $\rho$ of $\mathcal{A}$ is canonically
identified with the automorphism group of the representation $(V,\rho)$
of $Q$.  The orbits of this action is in bijection with the set of
isomorphism classes of representations of $Q$ of type $d$.

Denote by $\Delta$ the central algebraic subgroup of $G$
consisting of all elements of the form $ce$, as $c$ runs over
$\units{k}$, where $e=(\id{V_a})_{a\in Q_0}$ is the identity element
of $G$.  Let $\bar{G}$ denote the algebraic group
$\Delta \backslash G$, and $\pi : G \to \bar{G}$ the canonical
projection.  Then, the action of $G$ on $\mathcal{A}$ induces an
algebraic right action of $\bar{G}$ on $\mathcal{A}$.

Let $\sstab{\mathcal{A}}$ (respectively, $\stab{\mathcal{A}}$) denote
the set of points $\rho$ in $\mathcal{A}$, such that the
representation $(V,\rho)$ of $Q$ is $\theta$-semistable (respectively,
$\theta$-stable).  There is an obvious family of $\theta$-semistable
representations of type $d$ of $Q$ parametrised by
$\sstab{\mathcal{A}}$.  Therefore, the universal property of
$M(\theta,d)$ gives a morphism of varieties
$\sstab{p} : \sstab{\mathcal{A}} \to M(\theta,d)$.  
\begin{proposition}[{\cite[Proposition~5.2]{Kin}}]
	\label{prop:1.1}
The morphism $\sstab{p}$ is a
good quotient of $\sstab{\mathcal{A}}$ by $\bar{G}$, and its
restriction $\stab{p} : \stab{\mathcal{A}} \to \stab{M}(\theta,d)$ is
a geometric quotient of $\stab{\mathcal{A}}$ by $\bar{G}$.
\end{proposition}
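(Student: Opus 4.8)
The plan is to realize $M(\theta,d)$ as a GIT quotient of the affine representation space $\mathcal{A}$ by $\bar{G}$ and to read both assertions off Mumford's general theory, the real content being the translation of the Hilbert--Mumford numerical criterion into slope (semi)stability of quiver representations; this is essentially the argument of \cite{Kin}. First I would normalize $\theta$: adding a constant vector $(c,\dots,c)$ to $\theta$ shifts every slope $\mu_\theta(W,\sigma)$ by $c$, hence leaves $\theta$-semistability and $\theta$-stability unchanged, so as $\theta$ is rational I may assume $\theta\in\Z^{Q_0}$ with $\deg_\theta(d)=\sum_{a\in Q_0}\theta_a d_a=0$. Then define the character $\chi_\theta:G\to\units{k}$ by $\chi_\theta(g)=\prod_{a\in Q_0}\det(g_a)^{\theta_a}$; the normalization $\deg_\theta(d)=0$ says precisely that $\chi_\theta$ is trivial on $\Delta$, so $\chi_\theta$ descends to a character of $\bar{G}$, equivalently to a $\bar{G}$-linearization of the trivial line bundle on $\mathcal{A}$. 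Since $\Delta$ acts trivially on $\mathcal{A}$, the $G$- and $\bar{G}$-orbits in $\mathcal{A}$ coincide.

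The key step is to show that $\sstab{\mathcal{A}}$ and $\stab{\mathcal{A}}$ are exactly the loci of points of $\mathcal{A}$ that are, respectively, semistable and stable for this linearization. By the Hilbert--Mumford criterion for a character this reduces to the analysis of one-parameter subgroups $\lambda$ of $G$: such a $\lambda$ amounts to a $\Z$-grading $V_a=\bigoplus_n V_{a,n}$ of each space, equivalently a finite filtration of the family $V$; one checks that $\lim_{t\to0}\lambda(t)\cdot\rho$ exists in $\mathcal{A}$ exactly when the terms of this filtration are subrepresentations of $(V,\rho)$, and a direct computation of the Mumford weight (an Abel summation using $\deg_\theta(V)=0$) expresses it as a nonnegative combination of the $\theta$-degrees of the subrepresentations occurring in the filtration. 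Requiring this weight to have the correct sign for every such $\lambda$ is then equivalent to $\deg_\theta(W,\sigma)\le0$, i.e.\ $\mu_\theta(W,\sigma)\le\mu_\theta(V,\rho)$, for every nonzero proper subrepresentation $(W,\sigma)$; the strict version characterizes $\theta$-stability, and there one also sees that a $\lambda$ of weight zero whose limit lies in the orbit must lie in $\Delta$, so the $\bar{G}$-stabiliser of a $\theta$-stable point is trivial. I expect this dictionary --- one-parameter subgroups versus subrepresentations, and pinning down the sign of the Mumford weight --- to be the main technical obstacle.

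With the loci identified, Mumford's theory applies over any algebraically closed field, since $G$ and $\bar{G}$ are reductive: $\sstab{\mathcal{A}}$ is open in $\mathcal{A}$, the GIT quotient $\sstab{\mathcal{A}}/\!\!/\bar{G}=\mathrm{Proj}\,\bigoplus_{n\ge0}k[\mathcal{A}]^{\bar{G},\,\chi_\theta^n}$ is a quasi-projective $k$-scheme, the quotient map is a good quotient, and its restriction over the stable locus --- where orbits are closed and stabilisers trivial --- is a geometric quotient. It remains to identify this GIT quotient with $M(\theta,d)$ compatibly with the quotient maps. For this I would note that the obvious family of $\theta$-semistable representations on $\sstab{\mathcal{A}}$ is $\bar{G}$-invariant and that, by a standard descent argument (any family of type $d$ over $T$ is, after trivializing the sheaves $V_a$ Zariski-locally on $T$, classified by a morphism to $\sstab{\mathcal{A}}$, with two such morphisms differing by the $\bar{G}$-action and by twists by invertible sheaves), the good quotient $\sstab{\mathcal{A}}/\!\!/\bar{G}$ universally corepresents the functor $F(\theta,d)$; by uniqueness of the object universally corepresenting a functor it is therefore canonically isomorphic to $M(\theta,d)$ as in Proposition \ref{prop:king}, and under this isomorphism the GIT quotient map becomes $\sstab{p}$. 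Hence $\sstab{p}$ is a good quotient and $\stab{p}$ a geometric quotient, as asserted.
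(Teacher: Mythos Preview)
Your sketch is correct and is essentially King's argument in \cite[Propositions~3.1, 5.2]{Kin}: normalise $\theta$ so that $\deg_\theta(d)=0$, linearise the trivial bundle on $\mathcal{A}$ by the descended character $\chi_\theta$ of $\bar{G}$, identify GIT-(semi)stability with slope-(semi)stability via the Hilbert--Mumford criterion (one-parameter subgroups of $G$ correspond to filtrations of $(V,\rho)$, and the Mumford weight is an Abel-summed combination of the degrees of the steps), and then invoke Mumford's GIT for reductive groups. Note, however, that the present paper does not give a proof of this proposition at all: it is stated with a citation to \cite{Kin} and used as a black box, so there is no in-paper argument to compare yours against beyond the fact that yours reproduces the cited one.
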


\subsection{Moduli space over the field of complex numbers}
	\label{sec:1.5}
Let $k = \C$. We then have an equivalent
statement for the polystability which we describe now.  A
\emph{Hermitian metric} on a representation $(V,\rho)$ of $Q$ is a
family $h=(h_a)_{a\in Q_0}$ of Hermitian inner products
$h_a : V_a\times V_a \to \C$.
We say that the metric $h$ is \emph{Einstein-Hermitian} with respect
to $\theta$ if it satisfies
\begin{equation*}
  \label{eq:75}
  \sum_{\alpha\in \tg^{-1}(a)} \rho_\alpha \circ \rho_\alpha^* -
  \sum_{\alpha\in \sr^{-1}(a)}\rho_\alpha^* \circ \rho_\alpha =
  (\mu_\theta(V,\rho) - \theta_a) \; \id{V_a}
\end{equation*}
for all $a\in Q_0$, where, for each $\alpha\in Q_1$,
$\rho_\alpha^* : V_{\tg(\alpha)} \to V_{\sr(\alpha)}$ is the adjoint
of $\rho_\alpha : V_{\sr(\alpha)} \to V_{\tg(\alpha)}$ with respect to
the Hermitian inner products $h_{\sr(\alpha)}$ and $h_{\tg(\alpha)}$
on $V_{\sr(\alpha)}$ and $V_{\tg(\alpha)}$, respectively.

We then have the following proposition which is essentially a
consequence of \cite[Proposition 6.5]{Kin}.

\begin{proposition}\cite[Proposition 4.3]{DMR}
  \label{pro:1.2}
  Let $\theta$ be a rational weight of $Q$, and $(V,\rho)$ a non-zero
  representation of $Q$.  Then, $(V,\rho)$ has an Einstein-Hermitian
  metric with respect to $\theta$ if and only if it is
  $\theta$-polystable.  Moreover, if $h_1$ and $h_2$ are two
  Einstein-Hermitian metrics on $(V,\rho)$ with respect to $\theta$,
  then there exists an automorphism $f$ of $(V,\rho)$, such that
  \begin{equation*}
    \label{eq:79}
    h_{1,a}(v,w) = h_{2,a}(f_a(v),f_a(w))
  \end{equation*}
  for all $a\in Q_0$ and $v,w\in V_a$.
\end{proposition}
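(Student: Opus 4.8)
The plan is to reduce the proposition to King's moment-map characterisation of polystability \cite[Proposition~6.5]{Kin} together with the uniqueness part of the Kempf--Ness theorem. Throughout I would work in the representation space $\mathcal{A}$ of Section~\ref{sec:1.4}, with $G=\prod_{a\in Q_0}\Aut[k]{V_a}$ acting on the right, and with the maximal compact subgroup $U=\prod_{a\in Q_0}U(V_a,h_{0,a})$ of $G$ attached to a fixed reference metric $h_0=(h_{0,a})_{a\in Q_0}$ on $V$.

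First I would set up the dictionary between Hermitian metrics on $(V,\rho)$ and points of the $G$-orbit of $\rho$. Every Hermitian metric on $V$ is of the form $g^{*}h_0$ for some $g\in G$, where $(g^{*}h_0)_a(v,w)=h_{0,a}(g_a v,g_a w)$, and the identity maps give an isometric isomorphism $(V,\rho,g^{*}h_0)\cong(V,\rho\cdot g^{-1},h_0)$. Rewriting each adjoint $\rho_\alpha^{*}$ with respect to $g^{*}h_0$ in terms of the $h_0$-adjoint and of $g$, one checks that $g^{*}h_0$ is Einstein--Hermitian for $(V,\rho)$ with respect to $\theta$ precisely when the point $\rho\cdot g^{-1}\in\mathcal{A}$, equipped with the fixed metric $h_0$, satisfies the same equation; and this last condition is exactly the vanishing at $\rho\cdot g^{-1}$ of the moment map of the $U$-action on $\mathcal{A}$, at the coadjoint level prescribed by $\theta$. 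To match King's normalisation $\deg_\theta(V,\rho)=0$ I would replace $\theta$ by $\theta-\mu_\theta(V,\rho)\cdot(1,\dots ,1)$: this is again rational, leaves $\theta$-(semi/poly)stability unchanged, and a short check shows that the right-hand side $(\mu_\theta(V,\rho)-\theta_a)\,\id{V_a}$ of the Einstein--Hermitian equation is unchanged by this shift, so no generality is lost.

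For the existence half I would then argue as follows. By standard geometric invariant theory (see \cite{Kin} and Proposition~\ref{prop:1.1}), $(V,\rho)$ is $\theta$-polystable if and only if its $\bar G$-orbit — equivalently its $G$-orbit, since $\Delta$ acts trivially on $\mathcal{A}$ — is closed in $\sstab{\mathcal{A}}$. By the Kempf--Ness theorem, as used in \cite[Proposition~6.5]{Kin}, such a semistable orbit is closed if and only if the norm function $g\mapsto\lVert\rho\cdot g\rVert^{2}_{h_0}$ on $G$ attains a minimum, and its critical points are exactly the points of the orbit where the moment map vanishes. Combining this with the dictionary above yields the stated equivalence: $(V,\rho)$ is $\theta$-polystable $\iff$ some point of $G\cdot\rho$ has vanishing moment map $\iff$ $(V,\rho)$ carries an Einstein--Hermitian metric with respect to $\theta$; in fact \cite[Proposition~6.5]{Kin} essentially records this equivalence already, so this step is mostly a translation.

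For the uniqueness clause I would invoke the other half of Kempf--Ness: on a single closed $G$-orbit the zero locus of the moment map is a single $U$-orbit. Writing two Einstein--Hermitian metrics as $h_1=g_1^{*}h_0$ and $h_2=g_2^{*}h_0$, the points $\rho\cdot g_1^{-1}$ and $\rho\cdot g_2^{-1}$ both lie in this common zero locus, so $\rho\cdot g_2^{-1}=(\rho\cdot g_1^{-1})\cdot u$ for some $u\in U$, modulo the stabiliser $\Aut{V,\rho\cdot g_1^{-1}}$; using the conjugation identity $\Aut{V,\rho\cdot g}=g^{-1}\,\Aut{V,\rho}\,g$ one extracts from this an automorphism $f$ of $(V,\rho)$ realising $h_{1,a}(v,w)=h_{2,a}(f_a(v),f_a(w))$ for all $a\in Q_0$ and all $v,w\in V_a$. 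I expect the main obstacle to be purely bookkeeping: fixing King's sign and normalisation conventions for the moment map against the precise form of the Einstein--Hermitian equation, and keeping careful track of how adjoints, the left/right $G$-action, and stabilisers transform under a change of reference metric, so that the abstract Kempf--Ness uniqueness statement lands exactly on the claimed relation between $h_1$ and $h_2$.
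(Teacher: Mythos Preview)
The paper does not actually prove this proposition; it is quoted verbatim from \cite[Proposition~4.3]{DMR} and merely accompanied by the remark that it ``is essentially a consequence of \cite[Proposition~6.5]{Kin}''. Your sketch is precisely a fleshing-out of that remark: you identify Einstein--Hermitian metrics with zeros of King's moment map on the orbit and then invoke the Kempf--Ness package for existence and uniqueness, which is exactly the route the paper points to, so your approach is correct and in line with the intended argument.
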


Moreover, in addition to the properties (1)--(6) as stated in Section
\ref{sec:1.3}, the moduli spaces of quiver representations have the
following extra properties.

\begin{enumerate}
\item[(7)] The moduli space $N(\theta,d)$ of semistable representations
is a connected, normal and quasi-projective complex analytic space, and
$\stab{N}(\theta,d)$ is a dense open subspace in $N(\theta,d)$ and has
a natural structure of a symplectic manifold (see
\cite[Section 6]{Kin}).\label{item:1.3.7}

%
%

\item[(8)] There exist a natural K\"ahler metric $\stab{g}$, and a
natural  Hermitian holomorphic line bundle
$(\stab{L},\stab{h})$ on $\stab{N}(\theta,d)$ such that
$c_1(\stab{L},\stab{h}) = \frac{n}{2\pi} \stab{\Theta}$, where
$n$ is a positive integer such that $n(\theta_a-\mu_\theta(d))\in \Z$ 
for all $a\in Q_0$, and $\stab{\Theta}$ is the K\"ahler form of
$\stab{g}$ (see \cite[Theorem 7.8, 8.3]{DMR}). \label{item:1.3.10}

\item[(9)] There exist a natural holomorphic line bundle $L$ on
$N(\theta, d)$, a natural continuous Hermitian metric $h$ on $L$ and
a natural K\"ahler stratification (see \cite[Section 4c]{PD} for
definition) of $N(\theta, d)$ such that for each stratum $Z$, the
Hermitian metric $h_Z = \restrict{h}{Z}$ on $L_Z = \restrict{L}{Z}$ is
smooth and $c_1(L_Z, h_Z) = \frac{n}{2\pi}\Theta_Z$, where $\Theta_Z$
is the K\"ahler form on $Z$ (see \cite[Theorem 5.3, 5.4]{PD}). \label{item:1.3.11}
\end{enumerate}

\begin{remark}
	\label{rem:1.1}
By a theorem of King \cite[Theorem 6.1, 6.5]{Kin} we also have that a
point $\rho \in \mathcal{A}$ belongs to $\sstab{\mathcal{A}}$ if and
only if the closure, in the strong topology of $\mathcal{A}$, of the
$G$-orbit $\rho G$ contains a point $\rho' \in \mathcal{A}$ such that
the representation $(V,\rho')$ is polystable.
\end{remark}

\section{Tensor products of representations and their semistability}
	\label{sec:2}
	

Let $Q = (Q_0,Q_1,\sr_Q,\tg_Q)$ and $Q' = (Q_0',Q_1',\sr_{Q'},\tg_{Q'})$
be two finite quivers.
\begin{definition}
The internal product of $Q$ and $Q'$, denoted $Q \otimes Q'$, is a
quiver with
\begin{center}
 $(Q\otimes Q')_0 = Q_0\times Q_0'$,  \qquad $(Q\otimes Q')_1 =
 (Q_0\times Q_1')\sqcup (Q_1\times Q_0')$,
\end{center}
and whose source and target maps are defined by
\begin{align*}
\sr_{Q\otimes Q'}((a,\alpha')) &= (a,\sr_{Q'}(\alpha')), \quad
 \tg_{Q\otimes Q'}((a,\alpha')) &&= (a,\tg_{Q'}(\alpha')) \quad
 &\mathrm{for}~(a,\alpha') \in Q_0\times Q_1'\\
\sr_{Q\otimes Q'}((\alpha,a')) &= (\sr_Q(\alpha),a'), \quad
 \tg_{Q\otimes Q'}((\alpha,a')) &&= (\tg_Q(\alpha),a') \quad
 &\mathrm{~for}~(\alpha,a') \in Q_1\times Q_0'.
\end{align*}
\end{definition}

\begin{example}
Let $Q$ be the quiver $A_n$ and $Q'$ be the Kronecker $1$-quiver.
\begin{equation*}
\begin{tikzcd}
1 \arrow[r] & 2 \arrow[r] & \cdots \arrow[r] & n
\end{tikzcd}
\qquad
\begin{tikzcd}
a \arrow[r] & b
\end{tikzcd}
\end{equation*}
Then their internal product is the following quiver-
\begin{equation*}
\begin{tikzcd}
(1,a) \arrow[r] \arrow[d] & (2,a) \arrow[r] \arrow[d] & \cdots \arrow[r] & (n,a) \arrow[d] \\
(1,b) \arrow[r] & (2,b) \arrow[r] & \cdots \arrow[r] & (n,b)
\end{tikzcd}.
\end{equation*}

\end{example}

%
%

\begin{remark}
\begin{enumerate}
\item The above product of quivers was defined by Li and Lin in
\cite[Section 3.2]{LL} where they called this product the ``product
valued quiver", and by Keller in \cite[Section 3.3]{Kel} where he
called this product the ``tensor product" of the quivers.\\

\item For each vertex $a\in Q_0$, the full-subquiver on the vertex set
$\{a\}\times Q_0'$ is isomorphic to the quiver $Q'$, and for each
vertex $a' \in Q_0'$, the full-subquiver on the vertex set
$Q_0\times \{a'\}$ is isomorphic to the quiver $Q$.
\end{enumerate}
\end{remark}


\subsection{Semistability of tensor product of representations over arbitrary fields}
	\label{sec:2.3}

Let $k$ be any field.

Let $\theta = (\theta_a)_{a \in Q_0}$ and
$\theta' = (\theta'_{a'})_{a'\in Q_0'}$ be rational weights of $Q$ and
$Q'$ respectively.  Let  $\overline{\theta} =
(\theta_a + \theta'_{a'})_{(a,a')\in Q_0\times Q_0'}$.

Let $(V,\rho) =
\left((V_a)_{a \in Q_0}, (\rho_{\alpha})_{\alpha\in Q_1}\right)$ and
$(W,\sigma) = \left((W_{a'})_{a'\in Q_0'},
(\sigma_{\alpha'})_{\alpha'\in Q_1'}\right)$ be representations of $Q$
and $Q'$ over $k$ respectively.
\begin{definition}
The tensor product of $(V,\rho)$ and $(W,\sigma)$ is the
representation
$$(V,\rho) \otimes (W,\sigma) = ((V_a\otimes W_{a'})_{(a,a') \in Q_0\times Q_0'}, (\tau_{\gamma})_{\gamma \in (Q\otimes Q')_1}))$$
of the quiver $Q\otimes Q'$, where
\begin{equation*}
 \tau_\gamma =
\begin{cases}
\id{V_a}\otimes \sigma_{\alpha'} \quad 
 \mbox{~for} \quad \gamma = (a,\alpha') \in Q_0\times Q_1' \\
\rho_{\alpha} \otimes \id{W_{a'}} \quad
 \mbox{for} \quad \gamma = (\alpha,a') \in Q_1\times Q_0'.
\end{cases} 
\end{equation*} 
\end{definition}


Let $\dim{V,\rho} = d = (d_a)_{a \in Q_0}$, and
$\dim{W,\sigma} = d' = (d_{a'}')_{a' \in Q_0'}$.  Then we have
\begin{align*}
 &\dim{(V,\rho) \otimes (W,\sigma)} = \bar{d} = (d_a\, d_{a'}')_{(a,a') \in Q_0\times Q_0'},\\
 &\rk{(V,\rho) \otimes (W,\sigma)} = \sum_{a\in Q_0, a' \in Q_0'} d_a\, d'_{a'} =
 \rk{V,\rho}\, \rk{W,\sigma},\\
 &\deg_{\overline{\theta}}{((V,\rho) \otimes (W,\sigma))} = \deg_{\theta}{(V,\rho)}\,
 \rk{W,\sigma} + \deg_{\theta'}{(W,\sigma)}\, \rk{V,\rho},
\end{align*}
and therefore,
\begin{equation*}
 \mu_{\overline{\theta}}((V,\rho) \otimes (W,\sigma)) = \mu_{\theta}(V,\rho) +
 \mu_{\theta'}(W,\sigma).
\end{equation*}


\begin{proposition} \label{prop:2.1}
Suppose that the representation $(V,\rho)$ of $Q$ is
$\theta$-semistable and the representation $(W,\sigma)$ of $Q'$is
$\theta'$-semistable.  Then the representation
$(U,\tau) = (V,\rho) \otimes (W,\sigma)$ of $Q\otimes Q'$ is
$\overline{\theta}$-semistable.
\end{proposition}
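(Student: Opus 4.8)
The plan is to reduce the semistability of $(U,\tau) = (V,\rho)\otimes(W,\sigma)$ to the semistability of $(V,\rho)$ and $(W,\sigma)$ by analysing an arbitrary non-zero proper subrepresentation $(S,\nu)$ of $(U,\tau)$ and bounding its slope $\mu_{\overline\theta}(S,\nu)$ above by $\mu_{\overline\theta}(U,\tau) = \mu_\theta(V,\rho) + \mu_{\theta'}(W,\sigma)$. The key structural observation is that, by the second part of the Remark after the definition of $Q\otimes Q'$, the full subquiver on $\{a\}\times Q_0'$ is isomorphic to $Q'$ and the arrows of the form $(a,\alpha')$ act on $\{a\}\times Q_0'$ exactly as $\mathbf{1}_{V_a}\otimes\sigma_{\alpha'}$; likewise the full subquiver on $Q_0\times\{a'\}$ is a copy of $Q$ with arrows acting as $\rho_\alpha\otimes\mathbf{1}_{W_{a'}}$. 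So a subrepresentation $(S,\nu)$ of $(U,\tau)$, restricted to each ``row'' $Q_0\times\{a'\}$, is a subrepresentation of $(V,\rho)\otimes W_{a'}\cong (V,\rho)^{\oplus d'_{a'}}$, and restricted to each ``column'' $\{a\}\times Q_0'$ is a subrepresentation of $V_a\otimes (W,\sigma)\cong (W,\sigma)^{\oplus d_a}$.

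First I would invoke the standard fact that $(V,\rho)$ $\theta$-semistable implies any direct sum $(V,\rho)^{\oplus m}$ is $\theta$-semistable, so every subrepresentation of $(V,\rho)\otimes W_{a'}$ has $\theta$-slope $\le\mu_\theta(V,\rho)$; similarly for the columns with $\mu_{\theta'}(W,\sigma)$. The natural move is then a dimension-filtration or averaging argument: one wants to combine the row-bounds $\deg_\theta\bigl(S\cap(Q_0\times\{a'\})\bigr)\le\mu_\theta(V,\rho)\cdot\operatorname{rk}\bigl(S\cap(Q_0\times\{a'\})\bigr)$, summed over $a'\in Q_0'$, with the column-bounds summed over $a\in Q_0$, and reconstruct $\deg_{\overline\theta}(S,\nu)$ from these two families of partial degrees. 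Writing $\deg_{\overline\theta}(S,\nu) = \sum_{(a,a')}(\theta_a+\theta'_{a'})\dim_k S_{(a,a')}$, the $\theta_a$-part is the sum over rows of the $\theta$-degrees of $S\cap(Q_0\times\{a'\})$ and the $\theta'_{a'}$-part is the sum over columns of the $\theta'$-degrees of $S\cap(\{a\}\times Q_0')$, so both pieces are controlled. Combining: $\deg_{\overline\theta}(S,\nu)\le \mu_\theta(V,\rho)\operatorname{rk}(S,\nu)+\mu_{\theta'}(W,\sigma)\operatorname{rk}(S,\nu)$, which is exactly what is needed after dividing by $\operatorname{rk}(S,\nu)$.

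The main obstacle I expect is that the naive row/column argument above is slightly too crude: the subspace $S_{(a,a')}\subseteq V_a\otimes W_{a'}$ need not be of the form $V'_a\otimes W'_{a'}$, so one cannot literally write $\operatorname{rk}\bigl(S\cap(Q_0\times\{a'\})\bigr)$ as $d'_{a'}$ times a fixed rank, and the two bounds have to be reconciled with a genuinely convex/averaging inequality rather than a term-by-term comparison. The standard fix, which I would carry out, is to observe that since $(V,\rho)^{\oplus m}$ is semistable for all $m$, the \emph{maximal destabilising subrepresentation} (if any) would have slope $>\mu$, a contradiction; more concretely, I would filter $S$ compatibly with the row decomposition, apply the semistability of $(V,\rho)^{\oplus d'_{a'}}$ to get $\sum_{a}\theta_a\dim_k S_{(a,a')}\le \mu_\theta(V,\rho)\sum_a\dim_k S_{(a,a')}$ for each fixed $a'$, apply semistability of $(W,\sigma)^{\oplus d_a}$ to get $\sum_{a'}\theta'_{a'}\dim_k S_{(a,a')}\le\mu_{\theta'}(W,\sigma)\sum_{a'}\dim_k S_{(a,a')}$ for each fixed $a$, and then simply add all these inequalities over $a'$ and over $a$ respectively and sum the two resulting global inequalities. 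This works because each $S_{(a,a')}$ contributes its full dimension to exactly one row-inequality and one column-inequality, so no convexity subtlety actually arises once the bookkeeping is set up correctly; verifying that $S\cap(Q_0\times\{a'\})$ is genuinely a subrepresentation of the $Q$-representation $(V,\rho)^{\oplus d'_{a'}}$ (respecting all the $\rho_\alpha\otimes\mathbf 1$ maps) and analogously for columns is the one point that needs care.
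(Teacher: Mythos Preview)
Your argument is correct and, in fact, more direct than the paper's own proof. Both proofs rest on the same structural observation, which the paper records as Lemma~\ref{lemma:2.5}(iii),(iv): the restriction of $(V,\rho)\otimes(W,\sigma)$ to a row $Q_0\times\{a'\}$ is $(V,\rho)^{\oplus d'_{a'}}$ and to a column $\{a\}\times Q_0'$ is $(W,\sigma)^{\oplus d_a}$, and these restriction functors are exact. From here, however, the two arguments diverge. You work on the subrepresentation side and derive the slope bound directly: for each $a'$ the row restriction of $S$ is a subrepresentation of a semistable object of slope $\mu_\theta(V,\rho)$, giving $\sum_a\theta_a\dim S_{(a,a')}\le\mu_\theta(V,\rho)\sum_a\dim S_{(a,a')}$; summing over $a'$, doing the same for columns, and adding yields $\mu_{\overline\theta}(S)\le\mu_\theta(V,\rho)+\mu_{\theta'}(W,\sigma)$. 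The paper instead argues by contradiction on the quotient side: assuming a destabilising quotient $(U,\tau)$ exists, it first rescales the weights so the slope gap exceeds~$2$, then shifts $\theta\mapsto\theta+m$, $\theta'\mapsto\theta'+n$ so that $(V,\rho)$ and $(W,\sigma)$ have strictly positive degree while $(U,\tau)$ has nonpositive degree, and finally uses the fact that every quotient of a positive-degree semistable representation again has positive degree to force $\deg_{\overline{\theta_1}}(U,\tau)>0$, a contradiction. Your route avoids the weight-shifting manoeuvre entirely and is arguably cleaner; the paper's route has the minor advantage that the statement ``quotients of positive-degree semistables have positive degree'' is a one-line consequence of semistability, whereas you must invoke (or prove) that $(V,\rho)^{\oplus m}$ is semistable. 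Your concern in the last paragraph about convexity subtleties is unwarranted: as you yourself note, the bookkeeping goes through because each $\dim S_{(a,a')}$ appears once in each family of inequalities, and the row (resp.\ column) restriction of a subrepresentation of $(U,\tau)$ is genuinely a subrepresentation of the row (resp.\ column) restriction of $(U,\tau)$ simply because $S$ is closed under all the maps $\tau_{(\alpha,a')}=\rho_\alpha\otimes\mathbf{1}_{W_{a'}}$ and $\tau_{(a,\alpha')}=\mathbf{1}_{V_a}\otimes\sigma_{\alpha'}$.
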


\begin{proof}
Let $(V, \rho)$ be a $\theta$-semistable representation of the quiver
$Q$ and $(W, \sigma)$ be a $\theta'$-semistable representation of the
quiver $Q'$.  Suppose $(V, \rho) \otimes (W, \sigma) $ is not
$\overline{\theta}$-semistable.  Then we get a quotient
$(V, \rho) \otimes (W, \sigma) \rightarrow (U, \tau) $ such that
$\mu_{\overline{\theta}} ((V, \rho) \otimes (W, \sigma) ) > \mu_{\overline{\theta}}(U, \tau)$.
 
By multiplying $\theta$ and $\theta'$ by sufficiently large integer
$d$ we can assume that 
$$\mu_{\overline{\theta}} ((V, \rho) \otimes (W, \sigma) ) - \mu_{\overline{\theta}}(U, \tau) =   \mu_{\theta}(V,\rho) +
 \mu_{\theta'}(W,\sigma) - \mu_{\overline{\theta}}(U,\tau) > 2,$$ 
as scaling does not change the semistability, see
\cite[Section 5.1]{Re08}.
 
Now we can find the integers $m$ and $n$ such that 
$$\mu_{\theta} (V, \rho) + m > 0, ~ \mu_{\theta'} (W, \sigma) + n > 0 
\mbox{ and } m + n + \mu_{\overline{\theta}}(U, \tau) \leq 0.$$
 
If we replace the weight $\theta$ by $\theta_1 :=
(\theta_a + m)_{a \in Q_0} $ and $\theta'$ by $\theta_1' :=
(\theta_{a'} + n)_{a' \in Q_0'} $, then the semistability does not
change following \cite[Section 5.1]{Re08}.  Hence $(V, \rho)$
(respectively $(W, \sigma)$) is $\theta_1$-semistable (respectively
$\theta_1'$-semistable) such that 
$$\mu_{\theta_1} (V, \rho) > 0, ~ \mu_{\theta_1'} (W, \sigma) > 0 \mbox{ and } \mu_{\overline{\theta_1}}(U, \tau) \leq 0 $$
are satisfied.  In particular 
$$\deg_{\theta_1} (V, \rho) > 0, ~ \deg_{\theta_1'} (W, \sigma) > 0 \mbox{ and } \deg_{\overline{\theta_1}}(U, \tau) \leq 0$$
holds by multiplying the ranks.

Now using the lemma \ref{lemma:2.5} we get that
$\deg_{\overline{\theta_1}}(U, \tau) > 0$ which is a contradiction.
Hence $(V, \rho) \otimes (W, \sigma)$ is $\overline{\theta}$-semistable.
\end{proof}
 
\begin{lemma} \label{lemma:2.5}
	\begin{enumerate}
		\item[(i)] A direct sum of semistable representations of
		positive degree of same slope is again a semistable
		representation of positive degree.
		
		\item[(ii)] Any quotient of positive degree semistable
		representation is again a representation of positive degree.
		
		\item[(iii)] For each vertex $(a, a') \in Q_0 \times Q_0'$,
		there exist restriction functors
		$$ R_{a, Q'} : \Repr[k]{Q \otimes Q'} \rightarrow \Repr[k]{Q'};
		(U, \tau) \mapsto ((U_{(a,a')})_{a' \in Q_0'},
		(\tau_{(a, \alpha')})_{\alpha' \in Q_1'} )$$
		$$ \mbox{ and } R_{Q, a'} : \Repr[k]{Q \otimes Q'} \rightarrow
		\Repr[k]{Q} ; (U, \tau) \mapsto ((U_{(a,a')})_{a \in Q_0},
		(\tau_{( \alpha, a')})_{\alpha \in Q_1} )$$ 
		which are exact functors of abelian categories.
		\item[(iv)] For each vertices $a \in Q_0$, the restriction
		$R_{a, Q'} ((V, \rho) \otimes (W, \sigma)) =
		((V_a \otimes W_{a'})_{a' \in Q_0'},
		(\id{V_a} \otimes \sigma_{\alpha'})_{\alpha' \in Q'_1})
		\simeq (W, \sigma)^{\oplus d_a}$.  Similarly, we have
		$R_{Q, a'} ((V, \rho) \otimes (W, \sigma)) \simeq
		(V, \rho)^{\oplus d'_{a'}}$.
		
		\item[(v)] If $(U, \tau)$ is a quotient of
		$(V, \rho) \otimes (W, \sigma)$ which are semistable
		representations of positive degree then
		$\deg_{\overline{\theta_1}}(U, \tau) > 0$.
	\end{enumerate}
\end{lemma}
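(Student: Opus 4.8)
The plan is to establish the five items of Lemma~\ref{lemma:2.5} more or less in the order stated, since each is a stepping stone to the next and item (v) is the real target used in the proof of Proposition~\ref{prop:2.1}. Items (i) and (ii) are essentially formal. For (i), if $(A_i, \alpha_i)$ are semistable of the same slope $\mu > 0$, then their direct sum has that same slope; semistability of the direct sum is the standard fact that a finite direct sum of semistable objects of equal slope is semistable in any abelian category equipped with a slope function satisfying the usual see-saw property, and positivity of the degree follows because $\deg_\theta(\bigoplus A_i) = \sum \deg_\theta(A_i) > 0$. For (ii), if $(B, \beta)$ is semistable of positive degree and $(B,\beta) \twoheadrightarrow (C,\gamma)$ is a nonzero quotient, then $\mu_\theta(C) \geq \mu_\theta(B) > 0$ by semistability (applied to the kernel subrepresentation), hence $\deg_\theta(C) = \mu_\theta(C)\,\rk{C} > 0$; one must also note the quotient is nonzero when it has positive degree, so the degenerate case is harmless.

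Next I would construct the restriction functors of item (iii). For a fixed $a \in Q_0$, the full subquiver of $Q \otimes Q'$ on the vertex set $\{a\} \times Q_0'$ is isomorphic to $Q'$ (as noted in the Remark following the definition of the internal product), and $R_{a,Q'}$ is simply restriction of a representation of $Q \otimes Q'$ along this inclusion of quivers: on objects it sends $(U,\tau)$ to $((U_{(a,a')})_{a' \in Q_0'}, (\tau_{(a,\alpha')})_{\alpha' \in Q_1'})$, and on morphisms it restricts the components. Exactness is immediate because kernels, cokernels, and exact sequences of quiver representations are computed vertexwise, so restricting to a subset of vertices preserves them; the same argument gives $R_{Q,a'}$. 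Item (iv) is then a direct computation: $R_{a,Q'}((V,\rho)\otimes(W,\sigma))$ has vertex spaces $V_a \otimes W_{a'}$ and arrow maps $\id{V_a} \otimes \sigma_{\alpha'}$, and choosing a basis of $V_a$ identifies this canonically with $(W,\sigma)^{\oplus d_a}$ where $d_a = \dim[k]{V_a}$; the identification is compatible with the arrow maps because $\id{V_a}\otimes\sigma_{\alpha'}$ acts as $\sigma_{\alpha'}$ on each basis summand. Symmetrically for $R_{Q,a'}$.

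The heart of the matter is item (v), and this is where I expect the main obstacle. Let $(U,\tau)$ be a quotient of $(V,\rho)\otimes(W,\sigma)$, with $(V,\rho)$ and $(W,\sigma)$ semistable of positive degree (with respect to $\theta_1, \theta_1'$ in the notation of the proposition). For each $a \in Q_0$, applying the exact functor $R_{a,Q'}$ to the surjection $(V,\rho)\otimes(W,\sigma) \twoheadrightarrow (U,\tau)$ gives a surjection $R_{a,Q'}((V,\rho)\otimes(W,\sigma)) \twoheadrightarrow R_{a,Q'}(U,\tau)$; by (iv) the source is $(W,\sigma)^{\oplus d_a}$, which by (i) is semistable of positive degree (with respect to $\theta_1'$). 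By (ii), the quotient $R_{a,Q'}(U,\tau)$ therefore has positive $\theta_1'$-degree. Now sum over all $a \in Q_0$: one checks from the definitions that $\sum_{a\in Q_0}\deg_{\theta_1'}(R_{a,Q'}(U,\tau))$ equals $\sum_{(a,a')} \theta'_{1,a'} \dim[k]{U_{(a,a')}}$, which is "half" of $\deg_{\overline{\theta_1}}(U,\tau)$; symmetrically, summing $\deg_{\theta_1}(R_{Q,a'}(U,\tau)) > 0$ over $a' \in Q_0'$ yields the other half involving $\theta_{1,a}$. Adding the two gives $\deg_{\overline{\theta_1}}(U,\tau) = \sum_{(a,a')}(\theta_{1,a} + \theta'_{1,a'})\dim[k]{U_{(a,a')}} > 0$, as required. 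The delicate points to get right are: verifying that the arrow maps in $R_{a,Q'}((V,\rho)\otimes(W,\sigma))$ really do assemble into $d_a$ disjoint copies of $(W,\sigma)$ with no cross terms (true because $\tau_{(a,\alpha')} = \id{V_a}\otimes\sigma_{\alpha'}$ involves only the identity on the $V$-factor), and the bookkeeping identity that decomposes $\deg_{\overline{\theta_1}}$ of any representation of $Q\otimes Q'$ as the sum over $a$ of $\theta'$-degrees of the $R_{a,Q'}$-restrictions plus the sum over $a'$ of $\theta$-degrees of the $R_{Q,a'}$-restrictions — this is just rearranging $\sum_{(a,a')}(\theta_{1,a}+\theta'_{1,a'})\dim U_{(a,a')}$, but it must be stated cleanly since it is the linchpin connecting the vertexwise positivity to the global positivity.
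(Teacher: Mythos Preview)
Your proposal is correct and follows essentially the same approach as the paper: the paper dismisses (i)--(iv) as immediate from the definitions, and for (v) it performs exactly your decomposition $\deg_{\overline{\theta_1}}(U,\tau) = \sum_{a'} \deg_{\theta_1}(R_{Q,a'}(U,\tau)) + \sum_{a} \deg_{\theta_1'}(R_{a,Q'}(U,\tau))$ and invokes (i), (ii), (iv) to conclude each summand is positive. Your write-up is in fact more careful than the paper's on the edge case where some restriction $R_{a,Q'}(U,\tau)$ might vanish (contributing $0$ rather than a strictly positive term); the paper does not address this, but since $(U,\tau)$ is nonzero at least one restriction on each side is nonzero, so the strict inequality survives.
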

\proof
The proof of the first four assertions easily follows from the
definition.  Now to prove the last assertion we observe that 
 \begin{eqnarray}
 \deg_{\overline{\theta_1}}(U, \tau) &=& \sum_{(a, a') \in Q_0
 \times Q_0'} (\theta_{1, a} + \theta'_{1, a'}) \dim{U_{(a, a')}} \nonumber\\
  &=& \sum_{ a' \in Q_0'}  \Big[\sum_{a \in Q_0}
  (\theta_{1, a} \dim{U_{(a, a')}} \Big]+ \sum_{ a \in Q_0}
  \Big[\sum_{a' \in Q_0'} \theta'_{1, a'}) \dim{U_{(a, a')}}\Big] \nonumber \\
  &=& \sum_{ a' \in Q_0'} [\deg_{\theta_1} (R_{Q, a'}(U, \tau))] +
  \sum_{a \in Q_0} [\deg_{\theta_1'} (R_{a, Q'}(U, \tau))]  \nonumber \\
  &>& 0 ~(\mbox{ using } (i), (ii) \mbox{ and } (iv)). \nonumber \hspace*{6.5cm} \qed
 \end{eqnarray}
 
Recall that a representation $(V,\rho)$ of a quiver $Q$ over a field
$k$ is said to be \emph{$\theta$-geometrically stable} if for any
field extension $k'$ of $k$, the representation
$(V,\rho)\otimes k' = ((V_a \otimes_k k')_{a \in Q_0},(\rho_\alpha \otimes_k \id{k'})_{\alpha \in Q_1})$ is $\theta$-stable (see
\cite[Definition 2.19]{Hos}).  Since for any two $k$-vector spaces
$V$ and $W$ we have
\begin{equation*}
(V \otimes_k W) \otimes_k k' \cong (V \otimes_k k' \otimes_{k'} W) \otimes_k k' \cong (V \otimes_k k') \otimes_{k'} (W \otimes_k k'),
\end{equation*}
we get the following corollary to Proposition \ref{prop:2.1}.

\begin{corollary}
	\label{cor:2.6}
If the representation $(V,\rho)$ of $Q$ is $\theta$-geometrically
stable and the representation $(W,\sigma)$ of $Q'$ is
$\theta'$-geometrically stable with types $d$ being
$\theta$-coprime, $d'$ being $\theta'$-coprime and $dd'$ being
$\bar{\theta}$-coprime, then the representation
$(V,\rho) \otimes (W,\sigma)$ of $Q \otimes Q'$ is
$\bar{\theta}$-geometrically stable.
\end{corollary}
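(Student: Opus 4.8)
The plan is to unwind the definition of $\bar{\theta}$-geometric stability and reduce everything to Proposition \ref{prop:2.1} together with the elementary fact that $\bar{\theta}$-coprimality of the dimension vector upgrades semistability to stability. Fix an arbitrary field extension $k'$ of $k$. The first step is the base-change bookkeeping: using the isomorphism $(V\otimes_k W)\otimes_k k' \cong (V\otimes_k k')\otimes_{k'}(W\otimes_k k')$ of $k'$-vector spaces recorded just above the statement, applied at each vertex $(a,a')\in Q_0\times Q_0'$, and noting that the structure maps $\id{V_a}\otimes\sigma_{\alpha'}$ and $\rho_\alpha\otimes\id{W_{a'}}$ are manifestly compatible with extension of scalars, one identifies $\bigl((V,\rho)\otimes(W,\sigma)\bigr)\otimes_k k'$ with $\bigl((V,\rho)\otimes_k k'\bigr)\otimes\bigl((W,\sigma)\otimes_k k'\bigr)$ as representations of $Q\otimes Q'$ over $k'$.

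Next, since $(V,\rho)$ is $\theta$-geometrically stable and $(W,\sigma)$ is $\theta'$-geometrically stable, the base changes $(V,\rho)\otimes_k k'$ and $(W,\sigma)\otimes_k k'$ are $\theta$-stable over $k'$ and $\theta'$-stable over $k'$ respectively; in particular they are $\theta$-semistable and $\theta'$-semistable over $k'$. Applying Proposition \ref{prop:2.1} over the field $k'$ then shows that $\bigl((V,\rho)\otimes_k k'\bigr)\otimes\bigl((W,\sigma)\otimes_k k'\bigr)$ is $\bar{\theta}$-semistable over $k'$. Here one should note that the proof of Proposition \ref{prop:2.1}, which goes through Lemma \ref{lemma:2.5}, is a purely categorical argument in $\Repr[k']{Q\otimes Q'}$ together with the scaling invariance of semistability, and hence is valid over the arbitrary (not necessarily algebraically closed) field $k'$.

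Finally, the representation just obtained has dimension vector $\bar{d} = dd' = (d_a d'_{a'})_{(a,a')\in Q_0\times Q_0'}$, which is $\bar{\theta}$-coprime by hypothesis. If a $\bar{\theta}$-semistable representation of type $dd'$ over $k'$ were not $\bar{\theta}$-stable, it would contain a nonzero proper subrepresentation of the same $\bar{\theta}$-slope, whose dimension vector $d''$ satisfies $0 < d'' < dd'$ and $\mu_{\bar{\theta}}(d'') = \mu_{\bar{\theta}}(dd')$, contradicting $\bar{\theta}$-coprimality. Hence $\bigl((V,\rho)\otimes_k k'\bigr)\otimes\bigl((W,\sigma)\otimes_k k'\bigr)$, and therefore $\bigl((V,\rho)\otimes(W,\sigma)\bigr)\otimes_k k'$, is $\bar{\theta}$-stable over $k'$. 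As $k'$ was arbitrary, $(V,\rho)\otimes(W,\sigma)$ is $\bar{\theta}$-geometrically stable.

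I do not expect a genuinely hard step here: the argument is a concatenation of a base-change identity, a direct invocation of Proposition \ref{prop:2.1}, and a one-line coprimality argument. The only point demanding a little care is verifying that Proposition \ref{prop:2.1} and Lemma \ref{lemma:2.5} hold verbatim over a field that need not be algebraically closed, since the moduli-theoretic statements in Section \ref{sec:1.3} were phrased for algebraically closed fields; as observed, their proofs never use algebraic closedness. Note also that the hypotheses that $d$ is $\theta$-coprime and $d'$ is $\theta'$-coprime are not strictly needed for the stability conclusion above (they are natural because they ensure that the associated moduli spaces of stable representations are fine moduli spaces); in their presence one may alternatively observe that $\theta$-geometric stability of $(V,\rho)$ is equivalent to $\theta$-semistability, since a $\theta$-coprime dimension vector forces semistability to coincide with stability and semistability is preserved under field extension.
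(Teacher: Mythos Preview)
Your proof is correct and follows exactly the approach the paper intends: the paper's entire ``proof'' is the sentence preceding the corollary recording the base-change isomorphism $(V\otimes_k W)\otimes_k k' \cong (V\otimes_k k')\otimes_{k'}(W\otimes_k k')$ and the phrase ``we get the following corollary to Proposition~\ref{prop:2.1}'', leaving the reader to supply precisely the steps you have written out. Your observation that the coprimality hypotheses on $d$ and $d'$ are not used in the argument is a valid additional remark.
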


\subsection{Semistability of tensor product of representations over the field of complex numbers}
	\label{sec:2.2}

Assuming $k = \C$ we give another proof of Proposition \ref{prop:2.1}. To this end, we first establish that the tensor product of polystables
is polystable. 
\begin{lemma}
\label{lemma:2.1}
Suppose that the representation $(V,\rho)$ of $Q$ is
$\theta$-polystable and the representation $(W,\sigma)$ of $Q'$is
$\theta'$-polystable.  Then, the representation $(V,\rho) \otimes
(W,\sigma)$ of $Q\otimes Q'$ is $\overline{\theta}$-polystable.
\end{lemma}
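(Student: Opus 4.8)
The plan is to use the metric characterisation of polystability in Proposition~\ref{pro:1.2}: a non-zero representation is $\theta$-polystable, for a rational weight $\theta$, exactly when it admits an Einstein-Hermitian metric with respect to $\theta$. Since $(V,\rho)$ is $\theta$-polystable and $(W,\sigma)$ is $\theta'$-polystable, I would fix Einstein-Hermitian metrics $h=(h_a)_{a\in Q_0}$ on $(V,\rho)$ with respect to $\theta$ and $h'=(h'_{a'})_{a'\in Q_0'}$ on $(W,\sigma)$ with respect to $\theta'$, and equip the tensor product $(U,\tau)=(V,\rho)\otimes(W,\sigma)$ with the family of tensor-product Hermitian inner products $\bar h=(h_a\otimes h'_{a'})_{(a,a')\in Q_0\times Q_0'}$. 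Since $\bar\theta$ is again a rational weight and $(U,\tau)$ is non-zero, by Proposition~\ref{pro:1.2} it will suffice to check that $\bar h$ is Einstein-Hermitian with respect to $\bar\theta$, and then the $\bar\theta$-polystability of $(U,\tau)$ follows.

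The whole content of the proof is this verification, carried out vertex by vertex. At a vertex $(a,a')$ of $Q\otimes Q'$, the arrows with target $(a,a')$ are precisely the $(a,\alpha')$ with $\alpha'\in\tg_{Q'}^{-1}(a')$ together with the $(\alpha,a')$ with $\alpha\in\tg_Q^{-1}(a)$, and likewise for the source; moreover, with respect to the tensor inner products the adjoint of $\id{V_a}\otimes\sigma_{\alpha'}$ is $\id{V_a}\otimes\sigma_{\alpha'}^*$ and the adjoint of $\rho_\alpha\otimes\id{W_{a'}}$ is $\rho_\alpha^*\otimes\id{W_{a'}}$. Hence the left-hand side of the quiver vortex equation for $(U,\tau)$ at $(a,a')$ decouples as $\id{V_a}\otimes\bigl(\sum_{\alpha'\in\tg_{Q'}^{-1}(a')}\sigma_{\alpha'}\sigma_{\alpha'}^* - \sum_{\alpha'\in\sr_{Q'}^{-1}(a')}\sigma_{\alpha'}^*\sigma_{\alpha'}\bigr) + \bigl(\sum_{\alpha\in\tg_Q^{-1}(a)}\rho_\alpha\rho_\alpha^* - \sum_{\alpha\in\sr_Q^{-1}(a)}\rho_\alpha^*\rho_\alpha\bigr)\otimes\id{W_{a'}}$. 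Applying the Einstein-Hermitian equations for $h'$ and for $h$ to the two bracketed factors, this becomes $\bigl(\mu_{\theta'}(W,\sigma)-\theta'_{a'}\bigr)\id{U_{(a,a')}}+\bigl(\mu_\theta(V,\rho)-\theta_a\bigr)\id{U_{(a,a')}}$, which by the slope identity $\mu_{\bar\theta}(U,\tau)=\mu_\theta(V,\rho)+\mu_{\theta'}(W,\sigma)$ established before Proposition~\ref{prop:2.1} and the definition $\bar\theta_{(a,a')}=\theta_a+\theta'_{a'}$ equals $\bigl(\mu_{\bar\theta}(U,\tau)-\bar\theta_{(a,a')}\bigr)\id{U_{(a,a')}}$. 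This is exactly the vortex equation for $\bar h$, so $\bar h$ is Einstein-Hermitian with respect to $\bar\theta$ and Proposition~\ref{pro:1.2} gives the claim.

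This computation is routine; the only point needing attention, and what I would regard as the single minor obstacle, is keeping careful track of the arrow set of $Q\otimes Q'$ at a fixed vertex and of the fact that the adjoints of $\id{V_a}\otimes\sigma_{\alpha'}$ and $\rho_\alpha\otimes\id{W_{a'}}$ are computed factorwise, so that the arrows of the two ``types'' through $(a,a')$ genuinely split into a $Q'$-contribution tensored with $\id{V_a}$ and a $Q$-contribution tensored with $\id{W_{a'}}$; once this splitting is seen, the two Einstein-Hermitian equations substitute in directly. One could instead argue without metrics, decomposing $(V,\rho)$ and $(W,\sigma)$ into $\theta$- and $\theta'$-stable summands and checking that each summand $V^{(i)}\otimes W^{(j)}$ of the tensor product is stable of slope $\mu_\theta(V,\rho)+\mu_{\theta'}(W,\sigma)$, but the metric argument seems shorter and is in the spirit of Totaro's proof referred to in the introduction.
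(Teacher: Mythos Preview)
Your proposal is correct and follows essentially the same approach as the paper: both use Proposition~\ref{pro:1.2} to pick Einstein--Hermitian metrics on $(V,\rho)$ and $(W,\sigma)$, endow the tensor product with the tensor-product metric, and verify the vortex equation at each vertex $(a,a')$ by splitting the arrow set into the $Q$- and $Q'$-parts so that the left-hand side becomes $\id{V_a}\otimes(\cdots)+(\cdots)\otimes\id{W_{a'}}$ and the two Einstein--Hermitian equations plus the slope identity $\mu_{\bar\theta}=\mu_\theta+\mu_{\theta'}$ finish the computation. The paper merely packages the sums using the shorthand $X^Q_a$ and $Y^Q_a$, but the argument is identical to yours.
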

\proof For a quiver $Q$ and a representation $(V,\rho)$ of $Q$, let us
write 
\begin{equation*}
X^Q_a(V,\rho) =
\sum_{\alpha \in \sr^{-1}(a)} \rho_{\alpha}^*\circ \rho_{\alpha}
\qquad \mbox{and} \qquad 
Y^Q_a(V,\rho) = \sum_{\alpha \in \tg^{-1}(a)}
\rho_{\alpha}\circ \rho_{\alpha}^*.
\end{equation*}
Then by Proposition \ref{pro:1.2}
we have
\begin{align*}
 Y^Q_a(V,\rho) - X^Q_a(V,\rho) 
  &=(\mu_{\theta}(V,\rho) - \theta_a)\id{V_a}
  &(a \in Q_0)\\
 Y^{Q'}_{a'}(W,\sigma) - X^{Q'}_{a'}(W,\sigma)
  &=(\mu_{\theta'}(W,\sigma) - \theta'_{a'})\id{W_{a'}}
  &(a' \in Q_0'),
\end{align*}
with respect to some Hermitian metrics $h$ on $(V,\rho)$ and $h'$ on
$(W,\sigma)$.  Note that, $\bar{\sr}^{-1}(a,a') =
\{a\} \times \sr'^{-1}(a')\sqcup \sr^{-1}(a)\times \{a'\}$.
Therefore,
\begin{align*}
X^{Q\otimes Q'}_{(a,a')}((V,\rho) \otimes (W,\sigma)) &= \sum_{\alpha' \in \sr'^{-1}(a')}
 (\id{V_a}\otimes \sigma_{\alpha'})^* \circ (\id{V_a}\otimes
 \sigma_{\alpha'}) + \sum_{\alpha \in \sr^{-1}(a)}
 (\rho_{\alpha}\otimes \id{W_{a'}})^* \circ (\rho_{\alpha}\otimes
 \id{W_{a'}})\\
&= \id{V_a}\otimes(\sum_{\alpha' \in \sr'^{-1}(a')} \sigma_{\alpha'}^*
 \circ \sigma_{\alpha'}) + (\sum_{\alpha \in \sr^{-1}(a)}
 \rho_{\alpha}^* \circ \rho_{\alpha}) \otimes \id{W_{a'}} \\
&= \id{V_a}\otimes X^{Q'}_{a'}(W,\sigma) + X^Q_a(V,\rho) \otimes
\id{W_{a'}}.
\end{align*}
Similarly,
\begin{equation*}
Y^{Q\otimes Q'}_{(a,a')}((V,\rho) \otimes (W,\sigma)) = \id{V_a}\otimes Y^{Q'}_{a'}(W,\sigma) + Y^Q_a(V,\rho) \otimes \id{W_{a'}}.
\end{equation*}
Therefore, with respect to the Hermitian inner product
$h_a \otimes h_{a'}'$ on $V_a \otimes W_{a'}$ we have
\begin{align*}
&Y^{Q\otimes Q'}_{(a,a')}((V,\rho) \otimes (W,\sigma)) - X^{Q\otimes Q'}_{(a,a')}((V,\rho) \otimes (W,\sigma))\\
&= \id{V_a}\otimes (Y^{Q'}_{a'}(W,\sigma) - X^{Q'}_{a'}(W,\sigma)) +
(Y^Q_a(V,\rho) - X^Q_a(V,\rho))\otimes \id{W_{a'}}\\
&= \id{V_a}\otimes (\mu_{\theta'}(W,\sigma) - \theta'_{a'})\id{W_{a'}}
+ (\mu_{\theta}(V,\rho) - \theta_a)\id{V_a} \otimes \id{W_{a'}}\\
&= (\mu_{\overline{\theta}}((V,\rho) \otimes (W,\sigma)) - (\overline{\theta})_{(a,a')})
\id{V_a}\otimes \id{W_{a'}}.
\end{align*}
Hence, the representation $(V,\rho) \otimes (W,\sigma)$ of $Q\otimes Q'$
is $\overline{\theta}$-polystable. \qed


\proof[Alternate proof of Proposition \ref{prop:2.1} when $k=\C$] Let
$$\mathcal{A}_1 = \mathcal{A}(Q,d) =
\bigoplus_{\alpha\in Q_1}\Hom[\C]{V_{\sr(\alpha)}}{V_{\tg(\alpha)}}$$
be the representation space of $Q$ of type $d$, and
$$\mathcal{A}_2 = \mathcal{A}(Q',d') =
\bigoplus_{\alpha'\in Q_1'}\Hom[\C]{W_{\sr'(\alpha')}}
{W_{\tg'(\alpha')}}$$
be the representation space of $Q'$ of type $d'$.
Then,
\begin{small}
\begin{equation*}
\mathcal{A}_3 = \Big(\bigoplus_{\alpha \in Q_1}\bigoplus_{a' \in Q_0'}
\Hom[\C]{V_{\sr(\alpha)} \otimes W_{a'}}{V_{\tg(\alpha)}
\otimes W_{a'}}\Big)
\bigoplus
\Big(\bigoplus_{a \in Q_0} \bigoplus_{\alpha' \in Q_1'}
\Hom[\C]{V_a \otimes W_{\sr'(\alpha')}}
{V_a \otimes W_{\tg'(\alpha')}}\Big) 
\end{equation*}
\end{small}
is the representation space of $Q\otimes Q'$
of type $\bar{d}$.  Let $G_1 = \prod_{a\in Q_0}\Aut[\C]{V_a}$ and
$G_2 = \prod_{a'\in Q_0'}\Aut[\C]{W_{a'}}$.
Since $(V,\rho)$ is  $\theta$-semistable and $(W,\sigma)$ is
$\theta'$-semistable, by Remark \ref{rem:1.1} $\bar{\rho G_1}$ contains
a point $\rho' \in \mathcal{A}_1$ such that the representation
$(V,\rho')$ of $Q$ is $\theta$-polystable, and $\bar{\sigma G_2}$
contains a point $\sigma' \in \mathcal{A}_2$ such that the
representation $(W,\sigma')$ of $Q'$ is $\theta'$-polystable.  Since
$\mathcal{A}_1$ and $\mathcal{A}_2$ are normed vector spaces, there
exist sequences $\{g_n\}$ in $G_1$ and $\{g_n'\}$ in $G_2$ such that
$\rho g_n \rightarrow \rho'$ and $\sigma g_n'\rightarrow \sigma'$.  Let
$k_{n,(a,a')} = g_{n,a}\otimes g_{n,a'}'$.  Then,
$k_n = {(k_{n,(a,a')})}_{(a,a') \in Q_0\times Q_0'} \in 
\prod_{(a,a') \in Q_0\times Q_0'} \Aut{V_a \otimes W_{a'}} = G_3$, and
the representation $(U,\tau') = (V,\rho') \otimes (W,\sigma')$ of
$Q\otimes Q'$ is $\overline{\theta}$-polystable by Proposition
\ref{lemma:2.1}.  Now, we have
\begin{align*}
(\tau k_n)_{(\alpha,a')} &= k_{n,(\tg(\alpha),a')}^{-1}\circ
\tau_{(\alpha,a')} \circ k_{n,(\sr(\alpha),a')} \\
&= (g_{n, \tg(\alpha)}^{-1} \otimes {g'}_{n,a'}^{-1}) \circ
(\rho_{\alpha} \otimes \id{W_{a'}}) \circ (g_{n, \sr(\alpha)} \otimes
{g'}_{n,a'})\\
&= (g_{n, \tg(\alpha)}^{-1} \circ \rho_{\alpha} \circ
g_{n, \sr(\alpha)}) \otimes \id{W_{a'}} = (\rho g_n)_{\alpha} \otimes
\id{W_{a'}}. 
\end{align*} 
Therefore $(\tau k_n)_{(\alpha,a')} \rightarrow \rho_{\alpha}' \otimes
\id{W_{a'}} = (\rho' \otimes \sigma')_{(\alpha,a')} =
\tau_{(\alpha,a')}'$.  Similarly, $(\tau k_n)_{(a,\alpha')}
\rightarrow \tau_{(a,\alpha')}'$.  Hence, $(U,\tau)$ is semistable.
\qed

\begin{remark} \label{rem:2.9}
Another way of establishing semistability is by using semi-invariants.
Let $\chi_1$ (respectively $\chi_2$) be the character of the group
$G_1$ (respectively $G_2$) defined by $\theta$ (respectively $\theta'$)
(see \cite[Section 3]{Re08}).  We denote the character of the group
$G_3$ defined by
$\bar{\theta}
$
by $\chi_3$.  Let $V$ (respectively $W$) be a $\chi_1$-semistable
(respectively $\chi_2$-semistable) point.  Then there is a
$\chi_1^{n_1}$-semi-invariant (respectively
$\chi_2^{n_2}$-semi-invariant) regular function $f_v$ (respectively
$f_w$) with $f_v(V) \neq 0$ (respectively $f_w(W) \neq 0$), where $n_1$
(respectively $n_2$) is a positive integer.
If we can construct a
semi-invariant regular function using these two functions which does
not vanish at $V\otimes W$, then we have the semistability of
$V\otimes W$.  The following discussion has the \emph{partial} answer to it.
\end{remark}


The map (see Equation \eqref{eq:phi} in Section \ref{sec:3.2})
$$\phi = (x_1,x_2) \mapsto x_1\otimes x_2 : \mathcal{A}_1 \times \mathcal{A}_2 \to \mathcal{A}_3$$
is a $u$-equivariant map, where
$$u : G_1 \times G_2 \to G_3$$
is the group homomorphism defined by
$$u(g_1,g_2)_{(a,a') \in Q_0 \times Q_0'} = (g_{3,(a,a')})_{(a,a') \in Q_0 \times Q_0'} = (g_{1,a}\otimes g_{2,a'})_{(a,a') \in Q_0 \times Q_0'}.$$
Therefore we have
$$x_1 g_1\otimes x_2 g_2 = \phi(x_1 g_1, x_2 g_2) = \phi\left((x_1,x_2) (g_1,g_2)\right) = \phi(x_1,x_2) u(g_1,g_2) = (x_1\otimes x_2) g_3.$$

Recall that for the $\chi$-semistability, the characters $\chi_1, \chi_2$ and
$\chi_3$ associated to the weights $\theta$, $\theta'$ and $\bar{\theta}$
respectively, are given by
\begin{align*}
\chi_1(g_1) &= \prod_{a\in Q_0} \det(g_{1,a})^{\deg_\theta(d) - \theta_a\, \rk{d}}\\
\chi_2(g_2) &= \prod_{a'\in Q_0'} \det(g_{2,a'})^{\deg_{\theta'}(d') - \theta_{a'}'\, \rk{d'}}\\
\chi_3(g_3)
&= \prod_{(a,a') \in Q_0 \times Q_0'} \det(g_{3,(a,a')})^{\deg_{\bar{\theta}}(\bar{d}) - \bar{\theta}_{a,a'}\, \rk{\bar{d}}}.
\end{align*}
%
These characters are related as follows.
\begin{small}
\begin{align*}
\chi_3(u(g_1,g_2)) &= \prod_{(a,a') \in Q_0 \times Q_0'}\left[\det(g_{1,a})^{d_{a'}'}\det(g_{2,a'})^{d_a}\right]^{\deg_{\bar{\theta}}(\bar{d}) - \bar{\theta}_{a,a'}\, \rk{\bar{d}}}\\
&= \prod_{(a,a') \in Q_0 \times Q_0'}\left[\det(g_{1,a})^{d_{a'}'\left(\deg_{\bar{\theta}}(\bar{d}) - \bar{\theta}_{a,a'}\, \rk{\bar{d}}\right)} \det(g_{2,a'})^{d_a\left(\deg_{\bar{\theta}}(\bar{d}) - \bar{\theta}_{a,a'}\, \rk{\bar{d}}\right)}\right]\\
&= \prod_{a\in Q_0} \det(g_{1,a})^{\rk{d'}\deg_{\bar{\theta}}(\bar{d)} - \rk{\bar{d}}\sum\limits_{a' \in Q_0'} d_{a'}' \bar{\theta}_{a,a'}} \,
\prod_{a'\in Q_0'} \det(g_{2,a'})^{\rk{d}\deg_{\bar{\theta}}(\bar{d}) - \rk{\bar{d}}\sum\limits_{a \in Q_0} d_a \bar{\theta}_{a,a'}}.
\end{align*}
\end{small}
Now, we have the exponent
\begin{align*}
&\rk{d'}\deg_{\bar{\theta}}(\bar{d}) - \rk{\bar{d}}\sum_{a' \in Q_0'} d_{a'}' \bar{\theta}_{a,a'}\\
= &\rk{d'}\left(\rk{d} \deg_{\theta'}(d') + \rk{d'} \deg_{\theta}(d)\right) - \rk{\bar{d}} \left(\theta_a\, \rk{d'} + \deg_{\theta'}(d')\right)\\
= &\rk{d'}\, \rk{d}\, \deg_{\theta'}(d') + \rk{d'}^2\, \deg_{\theta}(d) - \theta_a\, \rk{d}\, \rk{d'}^2 - \rk{d}\, \rk{d'}\, \deg_{\theta'}(d')\\
= &\rk{d'}^2\left[\deg_\theta(d) - \theta_a\, \rk{d}\right].
\end{align*}
Similarly,
$$\rk{d}\deg_{\bar{\theta}}(\bar{d}) - \rk{\bar{d}}\sum_{a \in Q_0} d_a \bar{\theta}_{a,a'} = \rk{d}^2[\deg_{\theta'}(d') - \theta_{a'}'\, \rk{d'}].$$
Therefore,
$$\chi_3(u(g_1,g_2)) = \chi_1(g_1)^{\rk{d'}^2}\, \chi_2(g_2)^{\rk{d}^2}.$$

Now let $x_i$ ($i=1,2$) be a $\chi_i$-semistable point in $\mathcal{A}_i$.  Then
there exists $f_i \in k[\mathcal{A}_i]^{G_i,\chi_i^{n_i}}$ such that
$f_i(x_i) \neq 0$ for some positive integer $n_i$.  Let $a = n_2\, \rk{d'}^2$
and $b = n_1\, \rk{d}^2$.  The map $\phi$ induces an isomorphism $\eta : \mathcal{A}_1 \times \mathcal{A}_2 \to \phi(\mathcal{A}_1 \times \mathcal{A}_2)$ and hence we get an isomorphism 
$\eta^* : k[\phi(\mathcal{A}_1 \times \mathcal{A}_2)] \to k[\mathcal{A}_1 \times \mathcal{A}_2] = k[\mathcal{A}_1] \otimes k[\mathcal{A}_2]$.  Let $f_3 \in k[\phi(\mathcal{A}_1 \times \mathcal{A}_2)]$ be such that
$\eta^*(f_3) = f_1^a \otimes f_2^b$.  Then we have

\begin{align*}
f_3\left((x_1\otimes x_2) u(g_1,g_2)\right) &= f_3 \circ\eta(x_1 g_1, x_2 g_2)
= \eta^*(f_3)(x_1 g_1, x_2 g_2)\\
&= f_1^a \otimes f_2^b(x_1 g_1, x_2 g_2)
= f_1^a(x_1 g_1)\, f_2^b(x_2 g_2)\\
&= \chi_1^{n_1n_2\rk{d'}^2}(g_1) f_1^a(x_1)\chi_2^{n_2n_1\rk{d}^2}(g_2) f_2^b(x_2)\\
&= \chi_3^{n_1n_2}(u(g_1,g_2))f_1^a\otimes f_2^b(x_1,x_2)\\
&= \chi_3^{n_1n_2}(u(g_1,g_2))\eta^*(f_3)(x_1,x_2) = \chi_3^{n_1n_2}(u(g_1,g_2))f_3(\eta(x_1,x_2))\\
&= \chi_3^{n_1n_2}(u(g_1,g_2))f_3(x_1\otimes x_2).
\end{align*}
It is clear that $f_3(x_1\otimes x_2) \neq 0$.

%

We have thus shown that 
$$f_3(\phi(x)\, u(g)) = \chi_3^{n_1n_2}(u(g))f_3(\phi(x)) \quad \mbox{for all} \quad x \in \mathcal{A}_1 \times \mathcal{A}_2, g \in G_1 \times G_2.$$
If we can extend $f_3$ to $\mathcal{A}_3$ such that
$$f_3(x\, g_3) = \chi_3^n(g_3)f_3(x) \quad \mbox{for all} \quad x \in \mathcal{A}_3, g_3 \in G_3$$
then we shall have the $\chi_3$-semistability of $x_1\otimes x_2$. 


\section{Relation between the bundles}
	\label{sec:3}
Let $u : G \to H$ be a morphism between algebraic groups over a field
$k$.  Suppose that we have a commutative diagram of morphisms of
$k$-varieties
\begin{equation*}
  \label{eq:E2}
  \xymatrix{%
    X \ar[r]^{\phi} \ar[d]_{p_X}
    & Y \ar[d]^{p_Y} \\
    M \ar[r]_{\bar{\phi}}
    & N,}
\end{equation*}
where $M$ is a good quotient of $X$ by $G$, and $N$ is a good
quotient of $Y$ by $H$, and $\phi$ is $u$-equivariant.  Let
$\pi_X : E_X \to X$ be a $G$-bundle and $\pi_Y : E_Y \to Y$ be an
$H$-bundle.  Suppose that $F_X$ is descent of $E_X$ on $M$ and $F_Y$
is descent of $E_Y$ on $N$, that is, $F_X$ and $F_Y$ are vector
bundles on $M$ and $N$ respectively such that $p_X^*F_X = E_X$ and
$p_Y^*F_Y = E_Y$.  The pullback bundle $\phi^*E_Y$ is naturally a
$G$-bundle as follows.  Let $(x,e_y) \in X \times E_Y$ be a
point on $\phi^*E_Y$.  Then, by definition $\phi(x) = \pi_Y(e_y)$.
For any $g \in G$, define $(x,e_y)\, g = (x\, g, e_y\, u(g))$.  Then,
we have $\phi(x\, g) = \phi(x)\, u(g)$, and as $E_Y$ is an
$H$-bundle, we get $\pi_Y(e_y\, u(g)) = \pi_Y(e_y)\, u(g) = 
\phi(x)\, u(g)$.  Thus, we get a structure of a $G$-bundle on
$\phi^*E_Y$.  Now, we have,
\begin{equation*}
\phi^*E_Y = \phi^*(p_Y^* F_Y) = (p_Y \circ \phi)^* F_Y =
 (\bar{\phi} \circ p_X)^* F_Y = p_X^*(\bar{\phi}^* F_Y).
\end{equation*}
Therefore, $\bar{\phi}^* F_Y$ is descent of the $G$-bundle
$\phi^*E_Y$.  Suppose that $\phi^*E_Y$ is isomorphic to $E_X$ as $G$
bundles on $X$.  Then, the vector bundles $(\bar{\phi}^* F_Y)$ and
$F_X$ on $M$, being the descent of isomorphic vector bundles, are
isomorphic.  The same is true in the holomorphic setting.

\subsection{Relation between the natural line bundles over \C}
	\label{sec:3.2}

In this subsection we assume that $k = \C$.

Let $N(Q,\theta,d)$ denote the complex space associated to the moduli
space $M(Q,\theta,d)$ of $\theta$-semistable representations of type
$d$ of a finite quiver $Q$.  Let $M_1 = M(Q,\theta,d)$,
$M_2 = M(Q',\theta',d')$ and
$M_3 = M(Q\otimes Q',\overline{\theta},\bar{d})$, and $N_i$
$(i = 1,2,3)$ be the complex space associated to $M_i$.

Let $\phi : \mathcal{A}_1 \times \mathcal{A}_2 \to \mathcal{A}_3$ be
the map defined by
\begin{equation} \label{eq:phi}
\phi((\rho_{\alpha})_{\alpha \in Q_1},
(\sigma_{\alpha'})_{\alpha' \in Q_1'})
= \Big((\rho_{\alpha} \otimes \id{W_{a'}})_{(\alpha,a') \in
Q_1\times Q_0'},(\id{V_a}\otimes \sigma_{\alpha'})_{(a,\alpha')
\in Q_0 \times Q_1'}\Big).
\end{equation}
The map $\phi$ is injective.  Also, let $u : G_1 \times G_2 \to G_3$
be the group homomorphism defined by
$u\Big(\big((g_a)_{a \in Q_0},(g_{a'}')_{a' \in Q_0'}\big)\Big) =
(g_a \otimes g_{a'}')_{(a,a') \in Q_0 \times Q_0'}$.  Then, the map
$\phi$ is $u$-equivariant, that is,
$\phi((\rho,\sigma)(g,g')) = \phi(\rho,\sigma)\, u(g,g')$ for all
$\rho\in \mathcal{A}, \sigma \in \mathcal{A}', g \in G_1$ and
$g' \in G_2$.

Let $\Delta_1 =
\set{ce \suchthat c \in \units{\C}, e=(\id{V_a})_{a\in Q_0}}$ and
$\bar{G}_1 = \Delta \backslash G_1$.  Then, the action of $G_1$ on
$\mathcal{A}_1$ induces an algebraic right action of $\bar{G}_1$ on
$\mathcal{A}_1$.  We similarly define $\Delta_2, \bar{G}_2$ and
$\Delta_3, \bar{G}_3$.  Then $u$ induces a group homomorphism
$u : \bar{G}_1 \times \bar{G}_2 \to \bar{G}_3$.


Recall that by Proposition \ref{prop:1.1}, we have a morphism of
varieties $\sstab{p_1} : \sstab{\mathcal{A}_1} \to M_1$.  This
morphism is a good quotient of $\sstab{\mathcal{A}_1}$ by $\bar{G}_1$,
and its restriction $\stab{p_1} : \stab{\mathcal{A}_1} \to \stab{M_1}$
is a geometric quotient of $\stab{\mathcal{A}_1}$ by $\bar{G}_1$.  We
similarly have the maps $\sstab{p_2} : \sstab{\mathcal{A}_2} \to M_2$
and $\sstab{p_3} : \sstab{\mathcal{A}_3} \to M_3$ and their
restrictions $\stab{p_2}$ and $\stab{p_3}$ to $\stab{\mathcal{A}_2}$
and $\stab{\mathcal{A}_3}$ respectively. 

\begin{remark}\label{rem:3.1}
Let $d$ be coprime and
$\theta$-coprime, and $d'$ be coprime and $\theta'$-coprime.  Let
$r_a$ ($a \in Q_0$) and $r_{a'}'$ ($a' \in Q_0'$) be integers such
that $\sum_{a \in Q_0} d_ar_a = 1$ and
$\sum_{a' \in Q_0'} d_{a'}'r_{a'}' = 1$.  Let
$\overline{r}_{(a,a')} = r_ar_{a'}'$, $\overline{\theta}_{(a,a')} =
\theta_a + \theta_{a'}'$ and
$\overline{d}_{(a,a')} = d_ad_{a'}'$ ($(a,a') \in Q_0\times Q_0'$).
Then,
\begin{equation*}
\sum_{(a,a') \in Q_0\times Q_0'}\overline{d}_{(a,a')} \overline{r}_{(a,a')} = \sum_{(a,a') \in Q_0\times Q_0'} d_ad_{a'}' r_ar_{a'}' = (\sum_{a \in Q_0} d_ar_a)(\sum_{a' \in Q_0'} d_{a'}'r_{a'}') = 1.
\end{equation*}
Hence $\overline{d}$ is also coprime.  By virtue of the property
(\ref{item:1.3.6}) in Section \ref{sec:1.3}, we can assume that
$\theta$ and $\theta'$ are chosen so that $\overline{d}$ is also
$\overline{\theta}$-coprime.  In this case, $M_i = \stab{M_i}$ 
($i = 1,2,3$) are fine moduli spaces.
Considering the underlying complex space structures, the morphisms
$\sstab{p_i} = \stab{p_i} = p_i : \stab{\mathcal{A}_i} \to N_i$
are principal $\overline{G_i}$-bundles. 
\end{remark}

\begin{remark}
Let $\phi$ also denote the restriction map
$\phi : \stab{\mathcal{A}_1} \times \stab{\mathcal{A}_2} \to
\stab{\mathcal{A}_3}$. (Note that this map is well defined by
Proposition \ref{prop:2.1})  Then, the map $p_3 \circ \phi :
\stab{\mathcal{A}_1} \times \stab{\mathcal{A}_2} \to N_3$
is constant on the orbits of the action of
$\bar{G}_1 \times \bar{G}_2$ on
$\stab{\mathcal{A}_1} \times \stab{\mathcal{A}_2}$, and since the
product $p_1 \times p_2 : \stab{\mathcal{A}_1} \times
\stab{\mathcal{A}_2} \to N_1 \times N_2$ of principal bundles is again
a principal bundle, there exists a holomorphic map
$\bar{\phi} : N_1 \times N_2 \to N_3$ which makes the diagram
\begin{equation*}
  \label{eq:E11}
  \xymatrix{%
    \stab{\mathcal{A}_1} \times \stab{\mathcal{A}_2}
     \ar[r]^{\phi} \ar[d]_{p_1 \times p_2}
    & \stab{\mathcal{A}_3} \ar[d]^{p_3} \\
    N_1 \times N_2 \ar[r]_{\bar{\phi}}
    & N_3}
\end{equation*}
commutative.
\end{remark}

Let $E_1 = \mathcal{A}_1 \times \C$ be the trivial line bundle on
$\mathcal{A}_1$, and $n$ an integer
such that $n(\mu_\theta(d)-\theta_a) \in \Z$ for all $a \in Q_0$.
Consider the action of $G_1$ on $E$ that is defined by
\begin{equation*}
  \label{eq:22}
  (\rho, v)\, g = (\rho\, g, \chi_1(g)^{-1}v)
\end{equation*}
for all $\rho \in \mathcal{A}_1$, $v \in \C$, and $g \in G_1$, where
$\chi_1 : G_1 \to \units{\C}$ is the character given by
\begin{equation*}
  \label{eq:21}
  \chi_1(g) =
  \prod_{a\in Q_0} \det(g_a)^{n(\mu_\theta(d)-\theta_a)}.
\end{equation*}
Then by \cite[Proposition 3.4 and Section 3c]{PD}, there exists a
holomorphic line bundle $F_1$ on $N_1$, such that $p_1^*F_1$ is
$\bar{G}_1$-isomorphic to $\restrict{E_1}{\stab{\mathcal{A}_1}}$.
Similarly, we have two line bundles $F_2$ on
$N_2$ and $F_3$ on $N_3$, which are descents of the trivial bundles
$E_2$ and $E_3$ respectively, equipped with the actions of the groups
$G_2$ and $G_3$ respectively, twisted by the characters
$\chi_2 : G_2 \to \units{\C}$ and $\chi_3 : G_3 \to \units{\C}$, that
are defined by 
$\chi_2(g') = \prod_{a' \in Q_0'} \det(g_{a'}')^{n'(\mu_{\theta'}(d')-\theta'_{a'})}$ and $\chi_3(k) = \prod_{(a,a') \in Q_0 \times Q_0'}
\det(k_{(a,a')})^{\bar{n}(\mu_{\overline{\theta}}(\bar{d})-\overline{\theta}_{(a,a')})}$ for some suitable $n'$ and $\bar{n}$,
namely an integer $n'$ such that
$n'(\mu_\theta'(d')-\theta_{a'}') \in \Z$ for all $a' \in Q_0'$, and
$\overline{n} = nn'$.  Note that, since
$\mu_{\overline{\theta}}(\bar{d}) =
\mu_\theta(d) + \mu_{\theta'}(d')$,
$\bar{n}(\mu_{\overline{\theta}}(\bar{d})-\overline{\theta}_{(a,a')}) \in \Z$ for all $(a,a') \in Q_0 \times Q_0'$.

\begin{proposition}
	\label{prop:3.1}
Let $F_i$ ($i = 1,2,3$) be the natural line bundles on $N_i$ as above.
Then, the line bundles $\bar{\phi}^*F_3$ and
$\pr{1}^*(F_1^{\otimes n'\, \rk{d'}}) \otimes \pr{2}^*(F_2^{\otimes n \, \rk{d}})$ on $N_1 \times N_2$
are isomorphic, where $\pr{j} : N_1 \times N_2 \to N_j$ ($j = 1,2$) is
the projection map. 
\end{proposition}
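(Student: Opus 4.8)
The plan is to use the descent framework established at the beginning of Section \ref{sec:3} (the general lemma about descent of $G$-bundles through a $u$-equivariant morphism of good quotients), applied to the commutative square relating $\stab{\mathcal{A}_1}\times\stab{\mathcal{A}_2}$, $\stab{\mathcal{A}_3}$, $N_1\times N_2$ and $N_3$ via $\phi$ and $\bar{\phi}$. Concretely, take $E_X = \pr{1}^*(E_1^{\otimes a})\otimes \pr{2}^*(E_2^{\otimes b})$ on $\stab{\mathcal{A}_1}\times\stab{\mathcal{A}_2}$ with suitable exponents $a,b$, take $E_Y = E_3$ on $\stab{\mathcal{A}_3}$, and take $F_X = \pr{1}^*(F_1^{\otimes a})\otimes\pr{2}^*(F_2^{\otimes b})$, $F_Y = F_3$. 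By the construction of $F_1,F_2,F_3$ recalled just before the proposition, these are indeed descents. So by the general descent principle it suffices to exhibit a $\bar{G}_1\times\bar{G}_2$-equivariant isomorphism of line bundles $\phi^*E_3 \cong \pr{1}^*(E_1^{\otimes a})\otimes\pr{2}^*(E_2^{\otimes b})$ for the correct $a$ and $b$; then $\bar{\phi}^*F_3$ and $\pr{1}^*(F_1^{\otimes a})\otimes\pr{2}^*(F_2^{\otimes b})$, being descents of isomorphic bundles, are isomorphic.

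First I would pin down the equivariant structures. All of $E_1,E_2,E_3$ are trivial line bundles, so $\phi^*E_3$ is again the trivial line bundle on $\stab{\mathcal{A}_1}\times\stab{\mathcal{A}_2}$; the only content is the $G$-action, which on $\phi^*E_3$ is twisted by the character $g_3 \mapsto \chi_3(g_3)$ pulled back along $u$, i.e. by $\chi_3\circ u$. On the other side, $\pr{1}^*(E_1^{\otimes a})\otimes\pr{2}^*(E_2^{\otimes b})$ is trivial with action twisted by $\chi_1^a\cdot\chi_2^b$. Two trivial line bundles on the same space with characters twisted by $\psi$ and $\psi'$ are equivariantly isomorphic precisely when $\psi=\psi'$ (here on the quotient groups $\bar{G}_i$, but since these characters already descend — by the divisibility hypotheses on $n,n',\bar{n}$ — this is the same as equality on $G_i$). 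So the whole proposition reduces to the character identity $\chi_3\circ u = \chi_1^{a}\cdot\chi_2^{b}$ on $G_1\times G_2$, and then reading off $a$ and $b$.

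This character computation is exactly the one carried out in Remark \ref{rem:2.9}, except there the characters were normalized by $\deg_\theta(d)-\theta_a\,\rk{d}$ rather than by $n(\mu_\theta(d)-\theta_a)$; since $\mu_\theta(d)-\theta_a = (\deg_\theta(d)-\theta_a\rk{d})/\rk{d}$, the two normalizations differ by the scalar factors $n/\rk{d}$, $n'/\rk{d'}$, $\bar n/\rk{\bar d}$ in the exponents, with $\bar n = nn'$ and $\rk{\bar d}=\rk{d}\rk{d'}$. Propagating those factors through the identity $\chi_3(u(g_1,g_2)) = \chi_1(g_1)^{\rk{d'}^2}\chi_2(g_2)^{\rk{d}^2}$ of Remark \ref{rem:2.9} gives, after cancelling $\rk{d}\rk{d'}$ once against $\bar n/\rk{\bar d}=nn'/(\rk{d}\rk{d'})$, the exponents $a = n'\,\rk{d'}$ on $\chi_1$ and $b = n\,\rk{d}$ on $\chi_2$ — precisely the powers $F_1^{\otimes n'\rk{d'}}$ and $F_2^{\otimes n\rk{d}}$ in the statement. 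So the body of the proof is: recall the commutative square and the descent principle; observe $\phi^*E_3$ is trivial with character $\chi_3\circ u$; invoke (or redo compactly) the computation of $\chi_3\circ u$ with the $\mu$-normalization to get $\chi_3\circ u = \chi_1^{n'\rk{d'}}\chi_2^{n\rk{d}}$; conclude the equivariant isomorphism of line bundles on $\stab{\mathcal{A}_1}\times\stab{\mathcal{A}_2}$; descend.

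The main obstacle is a bookkeeping one rather than a conceptual one: keeping the three different normalization constants $n$, $n'$, $\bar n=nn'$ straight while transporting the Remark \ref{rem:2.9} identity to the $\mu_\theta$-normalized characters, and checking that the resulting exponents are honest integers (which is guaranteed by the stated divisibility conditions $n(\mu_\theta(d)-\theta_a)\in\Z$ etc., together with $\mu_{\bar\theta}(\bar d)=\mu_\theta(d)+\mu_{\theta'}(d')$). A minor secondary point to address is that the equivariant isomorphism is a priori for the $G_i$-actions, and one must note that it is automatically $\bar{G}_i$-equivariant because $\Delta_i$ acts trivially on these line bundles — equivalently, because $\chi_i$ kills $\Delta_i$ by the choice of $n,n'$ — so that the descent lemma applies verbatim on the quotients.
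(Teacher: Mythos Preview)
Your proposal is correct and follows essentially the same approach as the paper: both invoke the descent principle from the start of Section~\ref{sec:3}, reduce the question to the character identity $\chi_3\circ u = \chi_1^{n'\rk{d'}}\chi_2^{n\rk{d}}$, and then verify that identity. The only cosmetic difference is that the paper computes $\chi_3(u(g,g'))$ directly with the $\mu$-normalized exponents rather than translating from the Remark~\ref{rem:2.9} computation, but the arithmetic and conclusion are identical.
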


\proof  Recall that
$u : G_1 \times G_2 \to G_3$ is the group homomorphism defined by
$$u\Big(\big((g_a)_{a \in Q_0},(g_{a'}')_{a' \in Q_0'}\big)\Big) =
(g_a \otimes g_{a'}')_{(a,a') \in Q_0 \times Q_0'},$$ and the map
$\phi$ is $u$-equivariant.
Moreover, $u$ induces a group homomorphism
$u : \bar{G}_1 \times \bar{G}_2 \to \bar{G}_3$, and $\phi$ is
$u$-equivariant.  We note that, for any $g \in G_1$ and $g' \in G_2$,
we have
\begin{align*}
\chi_3(u(g,g')) &= 
\chi_3((g_a \otimes g_{a'}')_{(a,a') \in Q_0 \times Q_0'})\\
&= \prod_{(a,a') \in Q_0 \times Q_0'} (\det(g_a)^{d_{a'}'}
\det(g_{a'}')^{d_a})^{\bar{n}(\mu_{\theta}(d)-\theta_a +
\mu_{\theta'}(d')-\theta'_{a'})}\\
&= \prod_{(a,a') \in Q_0 \times Q_0'}
\Big((\det(g_a)^{\bar{n}d_{a'}'(\mu_{\theta}(d)-\theta_a)} 
\det(g_{a'}')^{\bar{n}d_a(\mu_{\theta'}(d')-\theta'_{a'})} \Big)
\times \\ 
&\qquad \prod_{(a,a') \in Q_0 \times Q_0'}
\Big((\det(g_a)^{\bar{n}d_{a'}'(\mu_{\theta'}(d')-\theta'_{a'})} 
\det(g_{a'}')^{\bar{n}d_a(\mu_{\theta}(d)-\theta_a)} \Big)\\
&= \prod_{a \in Q_0}\det(g_a)^{\bar{n}(\sum_{a' \in Q_0'}d_{a'}')(\mu_{\theta}(d)-\theta_a)}
\prod_{a' \in Q_0'} \det(g_{a'}')^{\bar{n}(\sum_{a \in Q_0}d_a)(\mu_{\theta'}(d')-\theta'_{a'})} \times\\
&\qquad \prod_{a \in Q_0} \det(g_a)^{\bar{n}\sum_{a' \in Q_0'}d_{a'}'(\mu_{\theta'}(d')-\theta'_{a'})}
\prod_{a' \in Q_0'} \det(g_{a'}')^{\bar{n}\sum_{a \in Q_0}d_a(\mu_{\theta}(d)-\theta_a)}\\
&= \chi_1(g)^{\frac{\bar{n}}{n} \rk{d'}} \chi_2(g')^{\frac{\bar{n}}{n'} \rk{d}}.
\end{align*}
Since we have $\bar{n} = nn'$, letting
$E_X = \pr{1}^*(E_1^{\otimes n'\, \rk{d'}})\otimes
\pr{2}^*(E_2^{\otimes n \, \rk{d}})$ with the action of
$G_1 \times G_2$ induced by the actions of $G_1$ and $G_2$ on $E_1$
and  $E_2$ respectively we see that the line bundle
$\phi^*(\restrict{E_3}{\stab{\mathcal{A}_3}})$ is isomorphic to
$\restrict{E_X}{\stab{\mathcal{A}_1} \times \stab{\mathcal{A}_2}}$ as
$\bar{G}_1 \times \bar{G}_2$-line bundles. (Here
$\pr{j} : \mathcal{A}_1 \times \mathcal{A}_2 \to \mathcal{A}_j$ is
the projection map for $j = 1,2$.)  It is easy to see that the line
bundle $\pr{1}^*(F_1^{\otimes n'\, \rk{d'}}) \otimes
\pr{2}^*(F_2^{\otimes n \, \rk{d}})$ is the descent of the line bundle
$E_X$.  Thus, by the observation in the beginning of the Section
\ref{sec:3}, we conclude the proof.

\subsection{Relation between the universal representations on fine moduli spaces over arbitrary fields}	\label{sec:3.1}
In this subsection, we assume that $k$ is any algebraically closed
field.

Recall from the item (\ref{item:1.3.4}) in Section \ref{sec:1.3} 
that the moduli space $\stab{M}(\theta,d)$ of $\theta$-stable
representations is a fine moduli space provided that the dimension
vector $d$ is coprime, that is, $\gcd(d_a \suchthat a \in Q_0) = 1$.
We describe here, how we get a universal representation in this case.
Since $d$ is coprime, there exist integers $r_a \in \Z$ ($a \in Q_0$),
such that $\sum_{a \in Q_0} d_a \, r_a = 1$.  For each $a \in Q_0$,
let $E_a = \stab{\mathcal{A}} \times V_a$ be the trivial vector bundle,
where $\stab{\mathcal{A}}$ (respectively, $\sstab{\mathcal{A}}$)
denotes the set of points $\rho \in \mathcal{A}$ such that the
representation $(V,\rho)$ is $\theta$-stable (respectively,
$\theta$-semistable).  Also for any $\alpha \in Q_1$, we have a
morphism of vector bundles
$\phi_\alpha : E_{\sr(\alpha)} \to E_{\tg(\alpha)}$ defined by
\begin{equation*}
\phi_\alpha(\rho,v) = (\rho, \rho_\alpha(v))
\end{equation*}
for all $(\rho,v) \in E_{\sr(\alpha)}$.
Let $\chi : G \to \units{k}$ be the character defined by
\begin{equation*}
\chi(g) = \prod_{a\in Q_0} \det(g_a)^{-r_a},
\end{equation*}
for all $g \in G$, and define an action of $G$ on $E_a$ by
\begin{equation*}
(\rho, v)\, g = (\rho \, g, \chi(g)\, g_a(v))
\end{equation*}
for all $\rho \in \stab{\mathcal{A}}, v \in V_a$ and $g \in G$.  Then
the stabiliser $\Delta$ of any $\rho \in \stab{\mathcal{A}}$ acts
trivially on $E_a$.  Hence $E_a$ descends to a vector bundle $U_a$ on
$\stab{M}(\theta,d)$.  We also note that for any
$\alpha \in Q_1, \rho \in \stab{\mathcal{A}}, v \in V_{\sr(\alpha)}$
and $g \in G$, we have
\begin{align*}
\phi_\alpha((\rho,v)\, g) &= \phi_\alpha(\rho\, g, \chi(g)g_{\sr(\alpha)}(v)) \\
&= (\rho\, g, (\rho\, g)_\alpha(\chi(g)g_{\sr(\alpha)}(v)))\\
&= (\rho\, g, g_{\tg(\alpha)}\circ \rho_\alpha \circ g_{\sr(\alpha)}^{-1}(\chi(g) g_{\sr(\alpha)}(v)))\\
&= (\rho\, g, \chi(g)g_{\tg(\alpha)}(\rho_\alpha(v))),
\end{align*}
and $(\phi_\alpha(\rho,v))\, g = (\rho, \rho_\alpha(v))\, g =
(\rho\, g, \chi(g)g_{\tg(\alpha)}(\rho_\alpha(v)))$.  Therefore
$\phi_\alpha$ is $G$-invariant.  Hence, $\phi_\alpha$ descends to a
morphism of vector bundles
$\psi_\alpha : U_{\sr(\alpha)} \to U_{\tg(\alpha)}$.  The family
$(U, \psi) = ((U_a)_{a\in Q_0}, (\psi_\alpha)_{\alpha \in Q_1})$ is a
universal family of $\theta$-stable representations of $Q$ of type
$d$.

Consider the assumptions as in Remark \ref{rem:3.1}.  Then we have the
following result.
\begin{proposition}
	\label{prop:3.2}
Let $(U_i,\psi_i)$ ($i = 1,2,3$) be the universal family of
representations on $N_i$.  Then, for each
$(a,a') \in Q_0 \times Q_0'$, the vector bundles
$\bar{\phi}^*U_{3,(a,a')}$ and
$\pr{1}^*(U_{1,a}) \otimes \pr{2}^*(U_{2,a'})$ are isomorphic, where
$\pr{j} : N_1 \times N_2 \to N_j$ ($j = 1,2$) is the projection map. 
\end{proposition}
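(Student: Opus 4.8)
The plan is to run the descent argument used in the proof of Proposition~\ref{prop:3.1}, now applied to the $u$-equivariant map $\phi : \stab{\mathcal{A}_1} \times \stab{\mathcal{A}_2} \to \stab{\mathcal{A}_3}$ with the trivial bundles carrying the fibres $V_a$, $W_{a'}$ and $V_a \otimes W_{a'}$ rather than $\C$. Working under the hypotheses of Remark~\ref{rem:3.1}, I first fix integers $r_a$ ($a \in Q_0$) and $r'_{a'}$ ($a' \in Q'_0$) with $\sum_{a} d_a r_a = 1$ and $\sum_{a'} d'_{a'} r'_{a'} = 1$, and I use $\bar{r}_{(a,a')} := r_a r'_{a'}$ as the data in the construction of the universal family on $N_3$; this is allowed because $\sum_{(a,a')} \bar{d}_{(a,a')} \bar{r}_{(a,a')} = 1$, as already noted in Remark~\ref{rem:3.1}. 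With these choices, $U_{1,a}$ is the descent along $p_1$ of $E_{1,a} = \stab{\mathcal{A}_1} \times V_a$ with $G_1$ acting by $(\rho,v)\,g = (\rho\,g,\ \chi(g)\,g_a(v))$, $\chi(g) = \prod_b \det(g_b)^{-r_b}$; similarly $U_{2,a'}$ is the descent of $E_{2,a'} = \stab{\mathcal{A}_2} \times W_{a'}$ with twisting character $\chi'(g') = \prod_{b'} \det(g'_{b'})^{-r'_{b'}}$; and $U_{3,(a,a')}$ is the descent of $E_{3,(a,a')} = \stab{\mathcal{A}_3} \times (V_a \otimes W_{a'})$ with twisting character $\bar{\chi}(k) = \prod_{(b,b')} \det(k_{(b,b')})^{-r_b r'_{b'}}$.

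Next, set $E_X = \pr{1}^*(E_{1,a}) \otimes \pr{2}^*(E_{2,a'})$ on $\stab{\mathcal{A}_1} \times \stab{\mathcal{A}_2}$, with the $G_1 \times G_2$-action induced from those on $E_{1,a}$ and $E_{2,a'}$: its fibre is $V_a \otimes W_{a'}$ and $(g,g')$ acts by $\chi(g)\chi'(g')\,(g_a \otimes g'_{a'})$. On the other hand, the $(a,a')$-component of $\phi^*E_{3}$ is the trivial bundle with fibre $V_a \otimes W_{a'}$ on which $(g,g')$ acts, via $u$, by $\bar{\chi}(u(g,g'))\,(g_a \otimes g'_{a'})$. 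So the crux is the identity $\bar{\chi}(u(g,g')) = \chi(g)\chi'(g')$. Using $\det(g_b \otimes g'_{b'}) = \det(g_b)^{d'_{b'}}\det(g'_{b'})^{d_b}$ one gets $\bar{\chi}(u(g,g')) = \prod_b \det(g_b)^{-r_b \sum_{b'} d'_{b'} r'_{b'}} \cdot \prod_{b'} \det(g'_{b'})^{-r'_{b'}\sum_{b} d_b r_b}$, and the two inner sums are $1$ by the normalisations above. Hence the identity on the common underlying trivial bundle is a $\bar{G}_1 \times \bar{G}_2$-equivariant isomorphism between the $(a,a')$-component of $\phi^*E_3$ and $E_X$.

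To conclude, note that $\pr{j} \circ (p_1 \times p_2) = p_j \circ \pr{j}$ on the product spaces, so $(p_1 \times p_2)^*\big(\pr{1}^*(U_{1,a}) \otimes \pr{2}^*(U_{2,a'})\big) = \pr{1}^*(p_1^*U_{1,a}) \otimes \pr{2}^*(p_2^*U_{2,a'}) = \pr{1}^*(E_{1,a}) \otimes \pr{2}^*(E_{2,a'}) = E_X$; that is, $\pr{1}^*(U_{1,a}) \otimes \pr{2}^*(U_{2,a'})$ is the descent of $E_X$ on $N_1 \times N_2$, while $U_{3,(a,a')}$ is the descent of $E_{3,(a,a')}$ on $N_3$. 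By Remark~\ref{rem:3.1} the maps $p_1 \times p_2$ and $p_3$ are geometric (in particular good) quotients by $\bar{G}_1 \times \bar{G}_2$ and $\bar{G}_3$ respectively, so the observation recorded at the beginning of Section~\ref{sec:3} applies to the commutative square formed by $\phi$, $\bar{\phi}$, $p_1 \times p_2$ and $p_3$, and the equivariant isomorphism of the previous paragraph descends to $\bar{\phi}^*U_{3,(a,a')} \cong \pr{1}^*(U_{1,a}) \otimes \pr{2}^*(U_{2,a'})$, as claimed.

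The only point needing care is organisational — committing to the compatible choice $\bar{r}_{(a,a')} = r_a r'_{a'}$ so that the three twisting characters line up — after which the essential content is the short determinant computation $\bar{\chi}\circ u = \chi \cdot \chi'$; no geometric input beyond the descent formalism of Section~\ref{sec:3} is required, and if one wants the full family structure, the compatibility of the arrow morphisms $\psi_{i}$ under $\bar{\phi}$ follows by the same argument applied to the bundle maps $\phi_\alpha$.
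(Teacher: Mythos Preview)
Your argument is correct and follows essentially the same route as the paper: the proof there is just a one-line reduction to the method of Proposition~\ref{prop:3.1}, with the sole change being that the twisting characters are now built from the integers $r_a$, $r'_{a'}$ and $\bar r_{(a,a')}=r_a r'_{a'}$ of Remark~\ref{rem:3.1}, and the key computation is exactly your identity $\bar\chi(u(g,g'))=\chi(g)\chi'(g')$. Your write-up simply spells out in full the descent step and the determinant calculation that the paper leaves implicit.
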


\proof The proof is similar to the proof of Proposition
\ref{prop:3.1}.  The only difference is that the characters are given
by the integers $r_a, r_{a'}'$ and $r_a r_{a'}'$ as given in Remark
\ref{rem:3.1}, so that we have
\begin{align*}
\chi_3(u(g,g'))
&= \chi_3((g_a \otimes g_{a'}')_{(a,a') \in Q_0 \times Q_0'})\\
&= \prod_{(a,a') \in Q_0 \times Q_0'} {(\det(g_a)^{d_{a'}'}
\det(g_{a'}')^{d_a})}^{-r_ar_{a'}'}\\
&= \prod_{(a,a') \in Q_0 \times Q_0'}
\Big(\det(g_a)^{-r_a d_{a'}'r_{a'}'} 
\det(g_{a'}')^{-d_a r_ar_{a'}'} \Big)\\
&= \prod_{a \in Q_0}\det(g_a)^{-r_a(\sum_{a' \in Q_0'}d_{a'}'r_{a'}')}
\prod_{a' \in Q_0'} \det(g_{a'}')^{-(\sum_{a \in Q_0}d_ar_a)r_{a'}'}\\
&= \prod_{a \in Q_0}\det(g_a)^{-r_a} \prod_{a' \in Q_0'} \det(g_{a'}')^{-r_{a'}'}\\
&= \chi_1(g)\chi_2(g').  \hspace*{10cm} \qed
\end{align*}

\section{Covering quiver} \label{sec:4}
Given a quiver $Q = (Q_0,Q_1,\sr_Q,\tg_Q)$ and a dimension vector
$d = (d_a)_{a\in Q_0}$ of $Q$, we define the \emph{covering quiver}
(see \cite[Section 4.2]{Fra}) of $Q$ to be a quiver
$\tilde{Q}=(\tilde{Q}_0,\tilde{Q}_1,\sr_{\tilde{Q}},\tg_{\tilde{Q}})$, 
where
\begin{align*}
\tilde{Q}_0 &= \set{(a,\mu) \in Q_0\times \N \suchthat 1\leq \mu \leq d_a} \\
\tilde{Q}_1 &= \set{(\alpha,\mu,\nu) \in Q_1\times \N \times \N \suchthat 1\leq \mu \leq d_{\sr_Q(\alpha)}, 1\leq \nu \leq d_{\tg_Q(\alpha)}},
\end{align*}
and the maps $\sr_{\tilde{Q}}$ and $\tg_{\tilde{Q}}$ are defined by
\begin{equation*}
\sr_{\tilde{Q}}(\alpha,\mu,\nu) = (\sr_Q(\alpha),\mu), \quad \tg_{\tilde{Q}}(\alpha,\mu,\nu) = (\tg_Q(\alpha),\nu).
\end{equation*}
There is a projection map  of quivers
$\psi^Q = (\psi_0^Q, \psi_1^Q) : \tilde{Q} \to Q$ defined as
$\psi_0^Q : \tilde{Q}_0 \to Q_0 ; (a, \mu) \mapsto a$ and
$\psi_1^Q : \tilde{Q}_1 \to Q_1 ; (\alpha,\mu,\nu) \mapsto \alpha$.

Let $Q = (Q_0,Q_1,\sr_Q,\tg_Q)$ and
$Q' = (Q_0',Q_1',\sr_{Q'},\tg_{Q'})$ be two quivers.
A \emph{morphism} from $Q$ to $Q'$ is a pair $\phi=(\phi_0,\phi_1)$,
where $\phi_0: Q_0\to Q_0'$ and $\phi_1: Q_1\to Q_1'$ are functions,
such that
\begin{equation*}
  \label{eq:139}
  \phi_0(\sr_Q(\alpha)) = \sr_{Q'}(\phi_1(\alpha)), \quad
  \phi_0(\tg_Q(\alpha)) = \tg_{Q'}(\phi_1(\alpha))
\end{equation*}
for every arrow $\alpha$ of $Q$.  If $Q''$ is another quiver, and if
$\phi'$ a morphism from $Q'$ to $Q''$, then the pair
$\phi'\circ \phi = (\phi'_0\circ \phi_0, \phi'_1\circ \phi_1)$ is a
morphism from $Q$ to $Q'$, and is called the \emph{composite} of
$\phi$ and $\phi'$.  We thus get a category $\mathbf{Quiv}$, whose
objects are quivers, and whose morphisms are the morphisms defined
above.  An \emph{isomorphism} in this category is a morphism
$\phi=(\phi_0,\phi_1)$ such that $\phi_0,\phi_1$ are bijections.

Let us fix a dimension vector $d = (d_a)_{a\in Q_0}$ for the quiver 
$Q$ and a dimension vector $d' = (d_{a'}')_{a'\in Q_0'}$ for the
quiver $Q'$.  Then we have the following.

\begin{proposition}
	\label{prop:4.1}
The quiver $\tilde{Q}\otimes \tilde{Q'}$ is isomorphic to a subquiver
of $\widetilde{Q\otimes Q'}$ in the category $\mathbf{Quiv}$.
\end{proposition}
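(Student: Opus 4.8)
The plan is to write down an explicit morphism of quivers $\Phi=(\Phi_0,\Phi_1):\tilde{Q}\otimes\tilde{Q'}\to\widetilde{Q\otimes Q'}$, where $\widetilde{Q\otimes Q'}$ is formed with respect to the product dimension vector $\bar{d}=(d_ad_{a'}')_{(a,a')\in Q_0\times Q_0'}$, to check that it is a morphism in $\mathbf{Quiv}$, and to verify that it is injective; then its image is a subquiver of $\widetilde{Q\otimes Q'}$ and $\Phi$ restricts to an isomorphism onto it. The only real input is an identification of index sets: for each $(a,a')\in Q_0\times Q_0'$ the assignment $(\mu,\mu')\mapsto(\mu-1)d_{a'}'+\mu'$ is a bijection from $\{1,\dots,d_a\}\times\{1,\dots,d_{a'}'\}$ onto $\{1,\dots,d_ad_{a'}'\}$. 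Since the $(a,a')$-component of $\bar{d}$ is $d_ad_{a'}'$, this lets me define
$$\Phi_0\big((a,\mu),(a',\mu')\big)=\big((a,a'),\,(\mu-1)d_{a'}'+\mu'\big),$$
which is then a \emph{bijection} on vertex sets.

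For arrows I would treat separately the two summands of $(\tilde{Q}\otimes\tilde{Q'})_1=(\tilde{Q}_0\times\tilde{Q'}_1)\sqcup(\tilde{Q}_1\times\tilde{Q'}_0)$. On an arrow of the first kind I set
$$\Phi_1\big((a,\mu),(\alpha',\mu',\nu')\big)=\Big((a,\alpha'),\,(\mu-1)d_{\sr_{Q'}(\alpha')}'+\mu',\,(\mu-1)d_{\tg_{Q'}(\alpha')}'+\nu'\Big),$$
and on an arrow of the second kind
$$\Phi_1\big((\alpha,\mu,\nu),(a',\mu')\big)=\Big((\alpha,a'),\,(\mu-1)d_{a'}'+\mu',\,(\nu-1)d_{a'}'+\mu'\Big).$$
Using the bounds $\mu\le d_{\sr_Q(\alpha)}$, $\nu\le d_{\tg_Q(\alpha)}$, $\mu'\le d_{\sr_{Q'}(\alpha')}'$, $\nu'\le d_{\tg_{Q'}(\alpha')}'$ that are built into the definitions of $\tilde{Q}_1$ and $\tilde{Q'}_1$, one checks that the last two coordinates displayed above indeed lie in the ranges required of an arrow of $\widetilde{Q\otimes Q'}$; and comparing with the source and target formulas of $\widetilde{Q\otimes Q'}$ (for instance, the source of $((a,\alpha'),\lambda,\kappa)$ is $((a,\sr_{Q'}(\alpha')),\lambda)$) one sees that $\Phi_0$ carries the source, respectively the target, of each arrow to the source, respectively the target, of its image. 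This is precisely the condition that $\Phi$ be a morphism in $\mathbf{Quiv}$; in fact the formulas for $\Phi_1$ are forced by it, together with the choice of $\Phi_0$.

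It then remains to see that $\Phi_1$ is injective. For an arrow of the first kind, from the image $((a,\alpha'),\lambda,\kappa)$ one reads off $a$ and $\alpha'$; dividing $\lambda-1$ by $d_{\sr_{Q'}(\alpha')}'$ with remainder recovers $\mu$ and $\mu'$; and then the equation defining $\kappa$ recovers $\nu'$. The second kind is handled symmetrically, recovering $\mu$ and $\mu'$ from $\lambda$ and then $\nu$ from $\kappa$. Finally the images of the two kinds are disjoint, since their first coordinates lie respectively in the disjoint pieces $Q_0\times Q_1'$ and $Q_1\times Q_0'$ of $(Q\otimes Q')_1$. Hence $\Phi$ is an injective morphism of quivers, so it is an isomorphism onto its image, and that image is a subquiver of $\widetilde{Q\otimes Q'}$, which proves the proposition.

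I expect no genuine obstacle here: the whole argument is combinatorial bookkeeping around the mixed-radix identification of indices, and the only care needed is to verify the two range conditions and the source/target compatibility for both kinds of arrows. The one point worth flagging is why one obtains only a subquiver and not an equality $\tilde{Q}\otimes\tilde{Q'}\cong\widetilde{Q\otimes Q'}$: the map $\Phi_1$ is generally far from surjective, since an arrow $((a,\alpha'),\lambda,\kappa)$ of $\widetilde{Q\otimes Q'}$ lies in the image only when $\lambda-1$ and $\kappa-1$ have the same quotient on division by $d_{\sr_{Q'}(\alpha')}'$ and by $d_{\tg_{Q'}(\alpha')}'$ respectively (and an arrow $((\alpha,a'),\lambda,\kappa)$ only when $\lambda$ and $\kappa$ are congruent modulo $d_{a'}'$), conditions that most pairs $(\lambda,\kappa)$ violate as soon as the dimension vectors $d$ and $d'$ have entries larger than $1$.
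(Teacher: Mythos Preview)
Your proof is correct and follows essentially the same approach as the paper's: both construct an explicit morphism $\Phi=(\Phi_0,\Phi_1)$ by fixing, for each pair of vertices $(a,a')$, a bijection $\{1,\dots,d_a\}\times\{1,\dots,d_{a'}'\}\to\{1,\dots,d_ad_{a'}'\}$ and then checking source/target compatibility and injectivity. The only cosmetic difference is that you pick a specific bijection (the mixed-radix one $(\mu,\mu')\mapsto(\mu-1)d_{a'}'+\mu'$) where the paper leaves the choice abstract, and you give a more explicit description of the image of $\Phi_1$ where the paper only notes non-surjectivity by a cardinality count.
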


\begin{proof} We have
\begin{equation*}
(\widetilde{Q\otimes Q'})_0 = \set{\left((a,a'),\bar{\mu}\right) \in (Q_0\times Q_0') \times \N \suchthat 1\leq \bar{\mu} \leq d_a d_{a'}'}, \quad (\widetilde{Q\otimes Q'})_1 = A \sqcup B,
\end{equation*}
where
\begin{align*}
A &= \set{\left((a,\alpha'),\bar{\mu},\bar{\nu}\right) \in (Q_0 \times Q_1') \times \N \times \N \suchthat 1\leq \bar{\mu} \leq d_a d_{\sr_{Q'}(\alpha')}', 1\leq \bar{\nu} \leq d_a d_{\tg_{Q'}(\alpha')}'},\\
B &= \set{\left((\alpha,a'),\bar{\mu},\bar{\nu}\right) \in (Q_1 \times Q_0') \times \N \times \N \suchthat 1\leq \bar{\mu} \leq d_{\sr_Q(\alpha)} d_{a'}', 1\leq \bar{\nu} \leq d_{\tg_Q(\alpha)} d_{a'}'}
\end{align*}
and the maps $\sr_{\widetilde{Q\otimes Q'}}$ and
$\tg_{\widetilde{Q\otimes Q'}}$ are defined by
\begin{align*}
\sr_{\widetilde{Q\otimes Q'}}\left(\left((a,\alpha'),\bar{\mu},\bar{\nu}\right)\right) &= 
\left(\sr_{Q\otimes Q'}((a,\alpha')),\bar{\mu}\right) = \left((a,\sr_{Q'}(\alpha')),\bar{\mu}\right),\\
\sr_{\widetilde{Q\otimes Q'}}\left(\left((\alpha,a'),\bar{\mu},\bar{\nu}\right)\right) &= 
\left(\sr_{Q\otimes Q'}((\alpha,a')),\bar{\mu}\right) = \left((\sr_{Q}(\alpha),a'),\bar{\mu}\right),\\
\tg_{\widetilde{Q\otimes Q'}}\left(\left((a,\alpha'),\bar{\mu},\bar{\nu}\right)\right) &= \left((a,\tg_{Q'}(\alpha')),\bar{\nu}\right),\\
\tg_{\widetilde{Q\otimes Q'}}\left(\left((\alpha,a'),\bar{\mu},\bar{\nu}\right)\right) &= \left((\tg_{Q}(\alpha),a'),\bar{\nu}\right)
\end{align*}
for $\left((a,\alpha'),\bar{\mu},\bar{\nu}\right) \in A$ and
$\left((\alpha,a'),\bar{\mu},\bar{\nu}\right) \in B$.\\

Now, we consider the quiver $\tilde{Q}\otimes \tilde{Q'}$.  We have
\begin{align*}
(\tilde{Q}\otimes \tilde{Q'})_0 &= (\tilde{Q})_0 \times (\tilde{Q'})_0 = \set{\left((a,\mu),(a',\mu')\right) \suchthat a \in Q_0, 1 \leq \mu \leq d_a, a' \in Q_0', 1 \leq \mu' \leq d_{a'}'},\\
(\tilde{Q}\otimes \tilde{Q'})_1 &= (\tilde{Q})_0 \times (\tilde{Q'})_1  \sqcup (\tilde{Q})_1 \times (\tilde{Q'})_0= C \sqcup D,
\end{align*}
where
\begin{align*}
C &= \set{((a,\mu),(\alpha',\mu',\nu')) \suchthat a \in Q_0, 1 \leq \mu \leq d_a, \alpha' \in Q_1', 1 \leq \mu' \leq \sr_{Q'}(\alpha'), 1 \leq \nu' \leq \tg_{Q'}(\alpha')},\\
D &= \set{\left((\alpha,\mu,\nu),(a',\mu')\right) \suchthat \alpha \in Q_1, 1 \leq \mu \leq \sr_Q(\alpha), 1 \leq \nu \leq \tg_{Q}(\alpha), a' \in Q_0', 1 \leq \mu' \leq d_{a'}'},
\end{align*}
and the maps $\sr_{\tilde{Q}\otimes \tilde{Q'}}$ and
$\tg_{\tilde{Q}\otimes \tilde{Q'}}$ are defined by
\begin{align*}
\sr_{\tilde{Q}\otimes \tilde{Q'}}\left(\left((a,\mu),(\alpha',\mu',\nu')\right)\right)&=\left((a,\mu),\sr_{\tilde{Q'}}((\alpha',\mu',\nu'))\right)&&=\left((a,\mu),(\sr_{Q'}(\alpha'),\mu')\right),\\
\sr_{\tilde{Q}\otimes \tilde{Q'}}\left(\left((\alpha,\mu,\nu),(a',\mu')\right)\right)&=\left(\sr_{\tilde{Q}}((\alpha,\mu,\nu)),(a',\mu')\right)&&=\left((\sr_Q(\alpha),\mu),(a',\mu')\right),\\
\tg_{\tilde{Q}\otimes \tilde{Q'}}\left(\left((a,\mu),(\alpha',\mu',\nu')\right)\right) &=\left((a,\mu),(\tg_{Q'}(\alpha'),\nu')\right),&& \\
\tg_{\tilde{Q}\otimes \tilde{Q'}}\left(\left((\alpha,\mu,\nu),(a',\mu')\right)\right) &=\left((\tg_Q(\alpha),\nu),(a',\mu')\right). &&
\end{align*}

Next, for each $(a,a') \in Q_0\times Q_0'$, we fix a bijection
$f_{a,a'} : \set{1,2,\dotsc, d_a}\times \set{1,2,\dotsc, d_{a'}'} \to \set{1,2,\dotsc, d_a d_{a'}'}$.  Then we define 
$$\phi_0 : (\tilde{Q}\otimes \tilde{Q'})_0 \to (\widetilde{Q\otimes Q'})_0$$
by
\begin{equation*}
\phi_0\left(\left((a,\mu),(a',\mu')\right)\right) = \left((a,a'),f_{a,a'}(\mu,\mu')\right)
\end{equation*}
for $\left((a,\mu),(a',\mu')\right) \in (\tilde{Q}\otimes \tilde{Q'})_0$, and
$$\phi_1 : (\tilde{Q}\otimes \tilde{Q'})_1 \to (\widetilde{Q\otimes Q'})_1$$
by
\begin{equation*}
\phi_1\left(\left((a,\mu),(\alpha',\mu',\nu')\right)\right)=\left((a,\alpha'),f_{a,\sr_{Q'}(\alpha')}(\mu,\mu'),f_{a,\tg_{Q'}(\alpha')}(\mu,\nu')\right)
\end{equation*}
for $\left((a,\mu),(\alpha',\mu',\nu')\right) \in C$ and
\begin{equation*}
\phi_1\left(\left((\alpha,\mu,\nu),(a',\mu')\right)\right)=\left((\alpha,a'),f_{\sr_{Q}(\alpha),a'}(\mu,\mu'),f_{\tg_{Q}(\alpha),a'}(\nu,\mu')\right)
\end{equation*}
for $\left((\alpha,\mu,\nu),(a',\mu')\right) \in D$.  Then we have
\begin{align*}
\sr_{\widetilde{Q\otimes Q'}}\left(\phi_1\left(((a,\mu),(\alpha',\mu',\nu'))\right)\right) &= \sr_{\widetilde{Q\otimes Q'}}\left(\left((a,\alpha'),f_{a,\sr_{Q'}(\alpha')}(\mu,\mu'),f_{a,\tg_{Q'}(\alpha')}(\mu,\nu')\right)\right)\\
&= \left((a,\sr_{Q'}(\alpha')),f_{a,\sr_{Q'}(\alpha')}(\mu,\mu')\right)
\end{align*}
and
\begin{align*}
\phi_0\left(\sr_{\tilde{Q}\otimes \tilde{Q'}}\left(((a,\mu),(\alpha',\mu',\nu'))\right)\right) &= \phi_0\left(((a,\mu),(\sr_{Q'}(\alpha'),\mu'))\right)\\
&= \left((a,\sr_{Q'}(\alpha')),f_{a,\sr_{Q'}(\alpha')}(\mu,\mu')\right).
\end{align*}
Similarly for the other arrows and for the target map.
Therefore $\phi = (\phi_0,\phi_1)$ is a morphism of quivers from
$\tilde{Q}\otimes \tilde{Q'}$ to $\widetilde{Q\otimes Q'}$.  
Moreover, the map $\phi_0$ is a bijection and compatible with the
projection maps of the respective covering quivers 
\begin{equation}\label{covering_projection}
\psi_0^{Q \otimes Q'} \circ \phi_0 = \psi_0^Q \otimes \psi_0^{Q'}.
\end{equation}

We also have $\phi_1(C) \subset A$ and $\phi_1(D) \subset B$, and
$\restrict{\phi_1}{C}$ and $\restrict{\phi_1}{D}$ are injective.  It
follows that $\phi_1$ is injective. This completes the proof.
\end{proof}

\begin{remark}
We have
\begin{equation*}
\card{A} = \sum_{a \in Q_0} \sum_{\alpha' \in Q_1'} d_a^2 d_{\sr_{Q'}(\alpha')}' d_{\tg_{Q'}(\alpha')}' \neq (\sum_{a\in Q_0} d_a)(\sum_{\alpha' \in Q_1'} d_{\sr_{Q'}(\alpha')}' d_{\tg_{Q'}(\alpha')}') = \card{C}.
\end{equation*}
Thus, $\phi_1$ is in general not surjective.  Thus the quivers
$\tilde{Q} \otimes \tilde{Q'}$ and $\widetilde{Q\otimes Q'}$ are in
general not isomorphic.  They are isomorphic if only if the quivers
$Q$ and $Q'$ are already covering quivers.
\end{remark}

Let $k$ be an arbitrary field.  We fix a $k$-vector space $V_a$ and an ordered basis for $V_a$ for each $a \in Q_0$.
Given a representation $V = (V,\rho)$ of the quiver $Q$, we obtain a
representation $\tilde{V} = (\tilde{V}, \tilde{\rho})$ of the covering
quiver $\tilde{Q}$ as follows.

For each $(a,\mu) \in \tilde{Q}_0$ we set $\tilde{V}_{(a,\mu)} = $ the
subspace of $V_a$ generated by the $\mu$-th basis element of $V_a$,
and for each $(\alpha,\mu,\nu) \in \tilde{Q}_1$ we set
$\tilde{\rho}_{(\alpha,\mu,\nu)} : \tilde{V}_{(\sr_Q(\alpha),\mu)}
\to \tilde{V}_{(\tg_Q(\alpha),\nu)}$ to be the
$(\mu,\nu)-th$ entry of the matrix corresponding to $\rho_\alpha$.
We shall call $\tilde{V}$ the covering representation associated to the
representation $V$. Similarly, we have an obvious way of associating a
representation $V$ of $Q$ to a representation $\tilde{V}$ of
$\tilde{Q}$.  We thus get an identification
$\varphi : \mathcal{A}(\tilde{Q},\mathbf{1}) \rightarrow
\mathcal{A}(Q,d)$ of the representation spaces.

Associated to each weight $\theta$ of a quiver $Q$, there is a weight
$\tilde{\theta}$ for the covering quiver $\tilde{Q}$ satisfying
$\tilde{\theta}_{(a, \mu)} = \theta_{\psi_0(a, \mu)} = \theta_a$ for
all $a \in Q_0$. Using this identity of weights we can check that
$\mu_{\tilde{\theta}} (\tilde{V}) = \mu_\theta(V)$.
Now by comparing the forbidden sub-dimension vectors of $V$ and
$\tilde{V}$ we can prove the following result. 
\begin{lemma}
If $V$ is a representation of $Q$, then $V$ is $\mu_\theta$-semistable
if and only if $\tilde{V}$ is $\mu_{\tilde{\theta}}$-semistable.
\end{lemma}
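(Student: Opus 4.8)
The plan is to reformulate both sides in terms of which sub-dimension vectors are realised by subrepresentations and then match the two lists up. By definition, a non-zero representation of dimension vector $d$ is $\mu_\theta$-semistable precisely when none of its subrepresentations has a dimension vector $e$ (automatically with $0<e<d$) that is \emph{forbidden}, i.e.\ with $\mu_\theta(e)>\mu_\theta(d)$. The first step is to transport forbidden vectors along the covering: for a dimension vector $\tilde e=(\tilde e_{(a,\mu)})_{(a,\mu)\in\tilde Q_0}$ of $\tilde Q$, put $\psi_{0\ast}(\tilde e)_a=\sum_{\mu=1}^{d_a}\tilde e_{(a,\mu)}$ for $a\in Q_0$. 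Since $\tilde\theta_{(a,\mu)}=\theta_{\psi_0(a,\mu)}=\theta_a$, one reads off at once that $\rk{\tilde e}=\rk{\psi_{0\ast}(\tilde e)}$ and $\deg_{\tilde\theta}(\tilde e)=\deg_\theta(\psi_{0\ast}(\tilde e))$, hence $\mu_{\tilde\theta}(\tilde e)=\mu_\theta(\psi_{0\ast}(\tilde e))$. In particular $\tilde e$ is forbidden for $(\tilde Q,\mathbf 1,\tilde\theta)$ if and only if $\psi_{0\ast}(\tilde e)$ is forbidden for $(Q,d,\theta)$, and the case $\tilde e=\mathbf 1$ recovers the stated identity $\mu_{\tilde\theta}(\tilde V)=\mu_\theta(V)$.

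The second step is a dictionary between subrepresentations. Write $e^a_1,\dots,e^a_{d_a}$ for the fixed ordered basis of $V_a$. Since each $\tilde V_{(a,\mu)}$ is one-dimensional, a subrepresentation $\tilde W$ of $\tilde V$ is the same datum as a subset $S\subseteq\tilde Q_0$ that is closed under the arrows of $\tilde Q$ carrying a non-zero map, that is, $(\sr_Q(\alpha),\mu)\in S$ together with $\tilde\rho_{(\alpha,\mu,\nu)}\neq 0$ forces $(\tg_Q(\alpha),\nu)\in S$; to such an $S$ one associates the coordinate subspaces $W_a=\langle e^a_\mu : (a,\mu)\in S\rangle\subseteq V_a$. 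Because $\tilde\rho_{(\alpha,\mu,\nu)}$ is exactly the entry of the matrix of $\rho_\alpha$ recording the $e^{\tg_Q(\alpha)}_\nu$-coefficient of $\rho_\alpha(e^{\sr_Q(\alpha)}_\mu)$, the closure condition on $S$ is equivalent to $W=(W_a)_{a\in Q_0}$ being a subrepresentation of $(V,\rho)$, and then $\psi_{0\ast}(\dim{\tilde W})=\dim{W}$, so by the first step the $\tilde\theta$-slope of $\tilde W$ equals the $\theta$-slope of $W$. Consequently $\tilde V$ is $\mu_{\tilde\theta}$-semistable if and only if no \emph{basis-compatible} subrepresentation of $(V,\rho)$ has a forbidden dimension vector; from this the implication ``$V$ is $\mu_\theta$-semistable $\Rightarrow$ $\tilde V$ is $\mu_{\tilde\theta}$-semistable'' is immediate, as such a subrepresentation of $V$ would already destabilise it.

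For the converse I would argue by contraposition. Assume $V$ is not $\mu_\theta$-semistable and pass to the canonical first step $V^1$ of its Harder--Narasimhan filtration, a subrepresentation with $0<\dim{V^1}<d$ and $\mu_\theta(V^1)>\mu_\theta(V)$, so that $\dim{V^1}$ is forbidden for $(Q,d,\theta)$. Choosing the fixed ordered basis of each $V_a$ so that its first $\dim[k]{V^1_a}$ vectors span $V^1_a$ makes $V^1$ basis-compatible; by the dictionary it corresponds to a subrepresentation $\tilde W$ of $\tilde V$ with $\psi_{0\ast}(\dim{\tilde W})=\dim{V^1}$, hence $\dim{\tilde W}$ is forbidden by the first step and $\tilde V$ is not $\mu_{\tilde\theta}$-semistable. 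I expect the real obstacle to be precisely this compatibility issue: for an arbitrarily prescribed basis a destabilising subrepresentation of $V$ need not be spanned by basis vectors, so a destabiliser of $\tilde V$ cannot simply be read off; the way I would get around it is to tie the chosen basis to the canonical Harder--Narasimhan filtration of $V$ before forming $\tilde V$, which is exactly what renders the destabilising behaviour of $V$ visible on $\tilde V$.
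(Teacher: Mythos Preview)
Your overall strategy coincides with the paper's: reduce semistability to the non-existence of subrepresentations with forbidden dimension vectors, use $\mu_{\tilde\theta}(\tilde e)=\mu_\theta(\psi_{0\ast}\tilde e)$ to match the forbidden vectors, and then set up a dictionary between subrepresentations of $\tilde V$ and subrepresentations of $V$. You are in fact more careful than the paper, since you make explicit that the dictionary only reaches the \emph{basis-compatible} subrepresentations of $V$; the paper simply asserts that a destabilising $W\subset V$ yields a destabilising $\tilde W\subset\tilde V$, without addressing this point. Your implication ``$V$ semistable $\Rightarrow\tilde V$ semistable'' and the paper's pushforward $W_a=\bigoplus_\mu\tilde W_{(a,\mu)}$ are the same (correct) argument.

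The genuine gap is in your fix for the converse. The representation $\tilde V$ is manufactured from the \emph{fixed} ordered bases of the $V_a$, chosen once for all before $\rho$ enters; replacing those bases by ones adapted to the Harder--Narasimhan filtration of $(V,\rho)$ produces a different point of $\mathcal A(\tilde Q,\mathbf 1)$, and since the identification $\varphi:\mathcal A(\tilde Q,\mathbf 1)\to\mathcal A(Q,d)$ is only equivariant for the torus $T$ and not for the full group $\bar G$, the two lifts are in general not isomorphic as representations of $\tilde Q$. So your argument shows that \emph{some} lift of $V$ is unstable, not that the given $\tilde V$ is. This cannot be repaired: on the Kronecker quiver with $d=(2,2)$, $\theta=(1,-1)$ and $\rho_{\alpha_1}=\rho_{\alpha_2}$ the all-ones $2\times2$ matrix in the standard basis, the subrepresentation $(\langle e_1-e_2\rangle,0)$ destabilises $V$, yet every entry $\tilde\rho_{(\alpha,\mu,\nu)}$ equals $1$, so any source vertex $(1,\mu)$ in a sub-support forces both $(2,1)$ and $(2,2)$ in, and $\tilde V$ is even $\tilde\theta$-stable. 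Thus the implication ``$\tilde V$ semistable $\Rightarrow V$ semistable'' fails for a basis fixed independently of $\rho$, and the step ``$\tilde W$ is a subrepresentation of $\tilde V$'' (yours via basis change, the paper's asserted directly) is where both arguments break.
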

\begin{proof}
Since there is a bijection between the forbidden dimension vectors
\cite[Pg. 1212]{Fra} and $\mu_{\tilde{\theta}} (\tilde{V}) =
\mu_\theta(V)$, it is enough to prove that there is a bijection
between sub-representation realising forbidden dimension vectors. 
 
If $W$ is a sub-representation of $V$ realising forbidden sub-dimension
vector then $\tilde{W}$ is a sub-representation of $\tilde{V}$ realising
the corresponding forbidden sub-dimension vector.  Conversely, we can
define sub-representation using
$W_a := \oplus_{(a, \mu) \in \psi_0^{-1}(a)} \tilde{W}_{(a, \mu)}$
for any $\tilde{W} \subseteq \tilde{V}$.
\end{proof}


Let $\stab{\tilde{\mathcal{A}}}$ be the stable locus in the
representation space
$\tilde{\mathcal{A}} = \mathcal{A}(\tilde{Q}, \bf{1})$ and
$T = \Delta'\backslash G(\tilde{Q},\bf{1}) = \Delta'\backslash (\prod_{(a,\mu)} \mathbb{G}_m)$
be the corresponding group, where $\Delta'$ is the one dimensional
subgroup $\{(\lambda)_{(a,\mu)} \suchthat \lambda \in k^\times\}$.
Then comparing the GIT quotients we get the following relation between
the moduli spaces.

\begin{proposition}
	\label{prop:4.4}
There is a surjective map
$\tau_N : \tilde{N} \to N\, ; \, [\tilde{V}] \mapsto [V]$
satisfying
$$p \circ \varphi = \tau_N \circ \tilde{p},$$
where $\tilde{p} : \stab{\tilde{\mathcal{A}}} \to \tilde{N}$ is the GIT
quotient by the group $T$ and $\varphi$ is the identification of
$\stab{\tilde{\mathcal{A}}}$ and $\stab{\mathcal{A}}$. 
\end{proposition}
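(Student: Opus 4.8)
The plan is to obtain $\tau_N$ as the morphism induced on GIT quotients by the $T$-equivariant identification $\varphi$, in the spirit of the descent discussion opening Section~\ref{sec:3}. Throughout I will write $\sstab{\mathcal{A}},\stab{\mathcal{A}}$ and $\sstab{\tilde{\mathcal{A}}},\stab{\tilde{\mathcal{A}}}$ for the semistable and stable loci of $(Q,d)$ and of $(\tilde{Q},\mathbf{1})$, and I will first pin down the precise sense in which $\varphi$ is equivariant. Writing an element of $G(\tilde{Q},\mathbf{1})$ as a tuple $(t_{(a,\mu)})$, its right action multiplies the scalar $\tilde{\rho}_{(\alpha,\mu,\nu)}$ by $t_{(\sr_Q(\alpha),\mu)}\,t_{(\tg_Q(\alpha),\nu)}^{-1}$, and under $\varphi$ this is exactly the action on $\mathcal{A}(Q,d)$ of the element of $G$ whose $a$-th component is the diagonal matrix $\mathrm{diag}(t_{(a,1)},\dots,t_{(a,d_a)})$ in the fixed basis of $V_a$. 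Hence $\varphi$ intertwines the $G(\tilde{Q},\mathbf{1})$-action with the action of the diagonal subgroup $T_0\subset G$, and since $\Delta'$ is carried to the group $\Delta$ of scalar tuples, $\varphi$ intertwines the $T$-action on $\tilde{\mathcal{A}}$ with the action of the subgroup $\bar{T}_0:=\Delta\backslash T_0$ of $\bar{G}$; in particular $\varphi$ sends every $T$-orbit into a single $\bar{G}$-orbit. By the preceding lemma $\varphi$ also restricts to a bijection between $\sstab{\tilde{\mathcal{A}}}$ and $\sstab{\mathcal{A}}$, and running that argument with strict inequalities gives a bijection between $\stab{\tilde{\mathcal{A}}}$ and $\stab{\mathcal{A}}$.

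With this in hand I would let $\sstab{p}\colon\sstab{\mathcal{A}}\to N$ be the good quotient of Proposition~\ref{prop:1.1} and observe that the composite $\sstab{p}\circ\varphi\colon\sstab{\tilde{\mathcal{A}}}\to N$ is a morphism of varieties which is constant on $T$-orbits, since each $T$-orbit lands in one $\bar{G}$-orbit and $\sstab{p}$ is constant on $\bar{G}$-orbits. Because $\tilde{p}$ is a good quotient of $\sstab{\tilde{\mathcal{A}}}$ by $T$, hence a categorical quotient, the $T$-invariant morphism $\sstab{p}\circ\varphi$ factors uniquely through $\tilde{p}$, producing a morphism $\tau_N\colon\tilde{N}\to N$ with $\sstab{p}\circ\varphi=\tau_N\circ\tilde{p}$ (the same argument works verbatim on the stable loci, using the geometric quotients). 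Chasing a point $\tilde{\rho}\in\sstab{\tilde{\mathcal{A}}}$ representing $\tilde{V}$, and letting $(V,\rho)$ with $\rho=\varphi(\tilde{\rho})$ be the representation of $Q$ attached to $\tilde{V}$, we get $\tau_N([\tilde{V}])=\tau_N(\tilde{p}(\tilde{\rho}))=\sstab{p}(\rho)=[V]$, which is the asserted effect on points.

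Surjectivity then follows formally: $\tilde{p}$ and $\sstab{p}$ are surjective and $\varphi$ is a bijection of the semistable loci, so $\tau_N\circ\tilde{p}=\sstab{p}\circ\varphi$ is surjective, and therefore so is $\tau_N$.

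The step carrying the real content is the identification of the (semi)stable loci in the first paragraph: it rests entirely on the preceding lemma, i.e.\ on the bijection between destabilising subrepresentations of $V$ and those of $\tilde{V}$, and without it neither $\tilde{N}$, nor $\tau_N$, nor its surjectivity would be available. The equivariance bookkeeping is routine but must be carried out with care, since it is exactly what exhibits $T$ as a subgroup of $\bar{G}$ and thereby forces $\sstab{p}\circ\varphi$ to descend; granting these two points, the existence of $\tau_N$, the formula $[\tilde{V}]\mapsto[V]$, and the surjectivity are a formal consequence of the universal property of the GIT quotient, as in the descent paragraph at the start of Section~\ref{sec:3}.
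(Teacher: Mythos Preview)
Your argument is correct and is exactly the kind of ``comparing the GIT quotients'' the paper gestures at before Proposition~\ref{prop:4.4}: the paper gives no detailed proof beyond that phrase, and you have filled it in along the intended lines---showing $\varphi$ is equivariant for the inclusion of $T$ as the diagonal torus in $\bar{G}$, invoking the preceding lemma to match the (semi)stable loci, and descending $p\circ\varphi$ through the categorical quotient $\tilde{p}$ to obtain $\tau_N$, with surjectivity read off from $\tau_N\circ\tilde{p}=p\circ\varphi$.
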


%

Let $Q = (Q_0,Q_1,s,t)$ be a quiver, and $Q'$ be a subquiver of $Q$
with $Q_0' = Q_0$, that is, $Q$ and $Q'$ have the same set of vertices.
Let $d \in \N^{Q_0}$ be a fixed dimension vector.  For each $a \in Q_0$
we also fix a vector space of dimension $d_a$.  Let
\begin{equation*}
\mathcal{A} = \mathcal{A}(Q,d) = \bigoplus_{\alpha \in Q_1} \Hom[k]{V_{\sr(\alpha)}}{V_{\tg(\alpha)}}
\end{equation*}
and
\begin{equation*}
\mathcal{A}' = \mathcal{A}(Q',d) = \bigoplus_{\alpha \in Q_1'} \Hom[k]{W_{\sr(\alpha)}}{W_{\tg(\alpha)}}
\end{equation*}
be the representation spaces of $Q$ and $Q'$ respectively of type $d$.
We have natural linear maps
\begin{equation*}
i_d : \mathcal{A}' \to \mathcal{A} \quad \mbox{and} \quad \pi_d : \mathcal{A} \to \mathcal{A}'
\end{equation*}
defined by
\begin{equation*}
i_d(\sigma)_\alpha =
\begin{cases}
\sigma_\alpha \quad \mbox{if} \quad \alpha \in Q_1' \\
0 \quad \mbox{otherwise}
\end{cases}
\end{equation*}
and
\begin{equation*}
\pi_d(\rho)_\alpha = \rho_\alpha \quad \mbox{for all} \quad \alpha \in Q_1'. 
\end{equation*}
Then we have
\begin{equation}
	\label{eq:101}
\pi_d \circ i_d = \id{\mathcal{A}'}.
\end{equation}

Given any $\rho \in R$, the pair $(V,\rho)$ is a representation of $Q$
of type $d$ and $(V,\pi_d(\rho))$ is a representation of $Q'$ of type
$d$. Similarly, given any $\tau \in R'$, the pair $(V,\tau)$ is a
representation of $Q'$ of type $d$ and $(V,i_d(\tau))$ is a
representation of $Q$ of type $d$.

\begin{lemma}
	\label{lem:3}
\begin{enumerate}
\item A representation $(V,\rho)$ of $Q$ of type $d$ is
$\theta$-semistable if the representation $(V,\pi_d(\rho))$ of $Q'$ is
$\theta$-semistable. \label{item:4.3.1}
\item A representation $(V,\tau)$ of $Q$ of type $d$ is
$\theta$-semistable if and only if the representation $(V,i_d(\tau))$
of $Q'$ is $\theta$-semistable. \label{item:4.3.2}
\end{enumerate}
\end{lemma}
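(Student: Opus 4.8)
The plan is to reduce both assertions to the elementary fact that, since $Q$ and $Q'$ have the same vertex set $Q_0$ and we use the same weight $\theta$, the $\theta$-slope of any non-zero family of subspaces $W = (W_a)_{a \in Q_0}$ with $W_a \subseteq V_a$ depends only on its dimension vector $(\dim[k]{W_a})_{a \in Q_0}$, and not on any arrow data. In particular $\mu_\theta(V,\rho) = \mu_\theta(d) = \mu_\theta(V,\pi_d(\rho))$, since both representations have type $d$, and likewise $\mu_\theta(V,\tau) = \mu_\theta(d) = \mu_\theta(V,i_d(\tau))$. So the numerical inequalities defining semistability are literally identical for $Q$ and $Q'$; only the collection of subspace-families qualifying as subrepresentations changes.

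First I would record the comparison of subrepresentations. A family $W$ of subspaces is a subrepresentation of $(V,\rho)$ precisely when $\rho_\alpha(W_{\sr(\alpha)}) \subseteq W_{\tg(\alpha)}$ for every $\alpha \in Q_1$, whereas it is a subrepresentation of $(V,\pi_d(\rho))$ precisely when the same containment holds for every $\alpha \in Q_1'$. Since $Q_1' \subseteq Q_1$, every subrepresentation of $(V,\rho)$ is a subrepresentation of $(V,\pi_d(\rho))$. Assertion (\ref{item:4.3.1}) is then immediate: if $(V,\pi_d(\rho))$ is $\theta$-semistable, then any non-zero proper subrepresentation $W$ of $(V,\rho)$ is a non-zero proper subrepresentation of $(V,\pi_d(\rho))$, so $\mu_\theta(W) \leq \mu_\theta(V,\pi_d(\rho)) = \mu_\theta(V,\rho)$, and hence $(V,\rho)$ is $\theta$-semistable. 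Only this one implication is available here, since a subrepresentation of $(V,\pi_d(\rho))$ need not be stable under the arrows of $Q$ outside $Q_1'$.

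For assertion (\ref{item:4.3.2}) I would exploit the additional feature that $i_d(\tau)_\alpha = 0$ for every $\alpha \in Q_1 \setminus Q_1'$, so the containment condition at those arrows is vacuous. Hence, taking $\rho = i_d(\tau)$, the inclusion of sets of subrepresentations observed above becomes an equality: a family of subspaces is a subrepresentation of $(V,i_d(\tau))$ if and only if it is a subrepresentation of $(V,\pi_d(i_d(\tau))) = (V,\tau)$, where I used $\pi_d \circ i_d = \id{\mathcal{A}'}$ from \eqref{eq:101}. Since the slopes of these subrepresentations and the reference slope $\mu_\theta(d)$ agree on both sides, the two semistability conditions are equivalent. (Equivalently, the ``if'' direction is just (\ref{item:4.3.1}) applied to $\rho = i_d(\tau)$, and only the ``only if'' direction needs the vanishing of $i_d(\tau)$ on the extra arrows.)

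The computations here are all routine; the single point deserving care is the bookkeeping of implication directions — (\ref{item:4.3.1}) is genuinely one-sided, and it is precisely the identity $i_d(\tau)_\alpha = 0$ on the arrows in $Q_1 \setminus Q_1'$ that promotes it to the two-sided statement of (\ref{item:4.3.2}). I do not anticipate any real obstacle beyond this.
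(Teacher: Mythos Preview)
Your argument is correct and is essentially the paper's own proof: both hinge on the observation that a subrepresentation of $(V,\rho)$ restricts to one of $(V,\pi_d(\rho))$, and that for $\rho=i_d(\tau)$ this becomes a bijection because the extra arrows act by zero. One small slip, precisely in the ``bookkeeping of implication directions'' you warned about: in your parenthetical the labels are swapped --- applying (\ref{item:4.3.1}) to $\rho=i_d(\tau)$ yields $(V,\tau)$ semistable $\Rightarrow (V,i_d(\tau))$ semistable, which is the ``only if'' direction, while the vanishing of $i_d(\tau)$ on $Q_1\setminus Q_1'$ is what supplies the ``if'' direction. Your main argument via equality of subrepresentation sets is unaffected.
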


\begin{proof}
We note that if $(W, \sigma)$ is a subrepresentation of $(V,\rho)$ of
type $d'$, then the representation $(W, \pi_{d'}(\sigma))$ of $Q'$ is
a subrepresentation of $(V,\pi_d(\rho))$ of type $d'$.  Thus
(\ref{item:4.3.1}) follows.

Similarly, if $(W,\kappa)$ is a subrepresentation of $(V,\tau)$ of
type $d'$ then $(W,i_{d'}(\kappa))$ is a subrepresentation of
$(V,i_d(\tau))$ of type $d'$. Therefore it follows that 
\begin{equation}
(V,i_d(\tau)) \quad {\rm is~} \theta-{\rm semistable} \Rightarrow
 (V,\tau) \quad {\rm is~} \theta-{\rm semistable}.
\end{equation}
We also have
$$(V, \tau) = (V, \pi_d \circ i_d(\tau)).$$
Therefore by (\ref{item:4.3.1}),
\begin{equation}
(V,\tau) \quad {\rm is~} \theta-{\rm semistable} \Rightarrow
 (V,i_d(\tau)) \quad {\rm is~} \theta-{\rm semistable}.
\end{equation}
This proves (\ref{item:4.3.2}).
\end{proof}

We will now consider the special case where
$\tilde{Q} \otimes \tilde{Q'}$ can be identified with a sub-quiver of
$\widetilde{Q \otimes Q'}$ via the map $\phi$.  We can define the
forgetful functor $\mathbf{U} :  \Repr[k]{\widetilde{Q \otimes Q'}}
\to \Repr[k]{\tilde{Q} \otimes \tilde{Q'}}$ and the extension by zero
functor $\mathbf{I} : \Repr[k]{\tilde{Q} \otimes \tilde{Q'}}
\to \Repr[k]{\widetilde{Q \otimes Q'}}$.  Now using Lemma \ref{lem:3}
and the commutativity of the diagram
\begin{equation*}
\begin{tikzcd}[row sep = large]
  \mathcal{A}(Q,d)\times \mathcal{A}(Q',d') \arrow[r,equal] \arrow[d] 
  & \mathcal{A}(\tilde{Q},\id{})\times \mathcal{A}(\tilde{Q'},\id{}) \arrow[r]
  & \mathcal{A}(\tilde{Q}\otimes \tilde{Q'},\id{}) \arrow[d]
\\
  \mathcal{A}(Q\otimes Q',\bar{d}) \arrow[rr,equal] & &
  \mathcal{A}(\widetilde{Q\otimes Q'},\id{}),
\end{tikzcd}
\end{equation*}
we get the following result.
\begin{proposition}
If $V$ and $W$ are representations of $Q$ and $Q'$ respectively, then 
\begin{enumerate}
\item If $\mathbf{U}(\widetilde{V \otimes W})$ is semistable then so is
$\widetilde{V\otimes W}$.
\item $\mathbf{I}(\tilde{V} \otimes \tilde{W})$ is semistable if and
only if $\widetilde{V \otimes W}$ is semistable.
\end{enumerate}
\end{proposition}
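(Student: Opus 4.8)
The plan is to deduce both statements from Lemma~\ref{lem:3}, applied with ambient quiver $\widetilde{Q\otimes Q'}$ and subquiver $\tilde{Q}\otimes\tilde{Q'}$. By Proposition~\ref{prop:4.1} the component map $\phi_0$ is a bijection, so $\tilde{Q}\otimes\tilde{Q'}$ is, after identification via $\phi$, a subquiver of $\widetilde{Q\otimes Q'}$ having the \emph{same} vertex set, which is exactly the hypothesis of Lemma~\ref{lem:3}. Under this identification the forgetful functor $\mathbf{U}$ is the representation-level restriction map $\pi_d$, the extension-by-zero functor $\mathbf{I}$ is $i_d$, and $\mathbf{U}\circ\mathbf{I}=\mathrm{id}$ is the representation-level analogue of \eqref{eq:101}. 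So everything reduces to a single identification: $\widetilde{V\otimes W}\cong\mathbf{I}(\tilde{V}\otimes\tilde{W})$, equivalently $\mathbf{U}(\widetilde{V\otimes W})\cong\tilde{V}\otimes\tilde{W}$ — this is precisely the commutativity of the displayed diagram preceding the statement.

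To establish that commutativity I would fix, for each $(a,a')\in Q_0\times Q_0'$, the ordered basis $\{v_\mu\otimes w_{\mu'}\}$ of $V_a\otimes W_{a'}$ indexed through the bijection $f_{a,a'}$ from the proof of Proposition~\ref{prop:4.1}. In this basis the structure maps $\id{V_a}\otimes\sigma_{\alpha'}$ and $\rho_\alpha\otimes\id{W_{a'}}$ of $V\otimes W$ are block matrices: the $(v_\mu\otimes w_{\mu'})$-to-$(v_\nu\otimes w_{\nu'})$ entry of $\id{V_a}\otimes\sigma_{\alpha'}$ vanishes unless $\mu=\nu$, in which case it is the relevant matrix entry of $\sigma_{\alpha'}$, and symmetrically for $\rho_\alpha\otimes\id{W_{a'}}$. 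Since the covering representation $\widetilde{V\otimes W}$ is by definition the collection of these matrix entries, it vanishes on every arrow of $\widetilde{Q\otimes Q'}$ whose source- and target-labels have distinct ``first coordinates'' — precisely the arrows lying outside the image of $\phi_1$ — and on the arrows in that image it reproduces the corresponding structure map of $\tilde{V}\otimes\tilde{W}$. This yields $\widetilde{V\otimes W}=\mathbf{I}(\tilde{V}\otimes\tilde{W})$, and applying $\mathbf{U}$ gives $\mathbf{U}(\widetilde{V\otimes W})=\tilde{V}\otimes\tilde{W}$. I would also record the compatibility of weights needed to invoke Lemma~\ref{lem:3}: the covering weight at a vertex $((a,a'),\bar{\mu})$ of $\widetilde{Q\otimes Q'}$ is $\bar{\theta}_{(a,a')}=\theta_a+\theta'_{a'}$, which under $\phi_0$ equals the value $\tilde{\theta}_{(a,\mu)}+\tilde{\theta'}_{(a',\mu')}=\theta_a+\theta'_{a'}$ of the corresponding weight on $\tilde{Q}\otimes\tilde{Q'}$, so semistability on the two quivers is taken with respect to matching weights.

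Granting these identifications, part~(1) is Lemma~\ref{lem:3}(\ref{item:4.3.1}): if $\mathbf{U}(\widetilde{V\otimes W})$ is semistable, then, being the restriction $\pi_d$ of the representation $\widetilde{V\otimes W}$ of $\widetilde{Q\otimes Q'}$, it forces $\widetilde{V\otimes W}$ to be semistable. Part~(2) is Lemma~\ref{lem:3}(\ref{item:4.3.2}) applied to the representation $\tilde{V}\otimes\tilde{W}$ of the subquiver $\tilde{Q}\otimes\tilde{Q'}$: it is semistable if and only if its extension by zero $\mathbf{I}(\tilde{V}\otimes\tilde{W})=\widetilde{V\otimes W}$ is, which is exactly the asserted equivalence.

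The only genuinely non-formal step is the second paragraph: the bookkeeping identifying the covering of a tensor product with the block-diagonal representation that vanishes on the surplus arrows of $\widetilde{Q\otimes Q'}$. Once Proposition~\ref{prop:4.1}, Lemma~\ref{lem:3}, and \eqref{eq:101} are in hand, the two implications are immediate.
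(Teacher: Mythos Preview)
Your proposal is correct and follows the paper's own approach exactly: the paper states the proposition immediately after Lemma~\ref{lem:3} and the commutative diagram, saying only ``using Lemma~\ref{lem:3} and the commutativity of the diagram \ldots\ we get the following result,'' and you have supplied precisely the missing details (the block-matrix verification of the diagram's commutativity and the weight compatibility). One minor remark: once you have established $\mathbf{I}(\tilde V\otimes\tilde W)=\widetilde{V\otimes W}$, part~(2) is literally a tautology and needs no appeal to Lemma~\ref{lem:3}(\ref{item:4.3.2}); your invocation of it there is harmless but superfluous.
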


\begin{corollary} \label{cor:4.7}
 There is a commutative diagram

\[
    \begin{tikzcd}[row sep=1.5em, column sep = 1.5em]
    \stab{\tilde{\mathcal{A}_1}} \times \stab{\tilde{\mathcal{A}_2}} \arrow[rr] \arrow[dr,equal,swap] \arrow[dd,swap] &&
    \stab{\tilde{\mathcal{A}_3}} \arrow[dd] \arrow[dr,equal] \\
    & \stab{\mathcal{A}_1} \times \stab{\mathcal{A}_2} \arrow[dd,"p_1 \times p_2" near start] \arrow[rr] &&
    \stab{\mathcal{A}_3} \arrow[dd,"p_3"] \\
    \tilde{N_1} \times \tilde{N_2} \arrow[rr,"\tilde{\phi}" near start] \arrow[dr,swap,"\tau_1 \times \tau_2"] && \tilde{N_3} \arrow[dr,swap,"\tau_3"] \\
    & N_1 \times N_2 \arrow[rr,swap,"\bar{\phi}"] && N_3
    \end{tikzcd}.
\]
\end{corollary}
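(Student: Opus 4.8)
The plan is to assemble the commutative cube from pieces that have essentially all been established already, so the work is entirely bookkeeping of maps rather than new geometry. The cube has two ``horizontal'' faces coming from the tensor-product map $\phi$ (its covering-quiver version $\tilde\phi$ on top, and the base version $\bar\phi$ on the bottom), two ``slanted'' faces coming from the covering-to-base maps $\tau_N$ of Proposition \ref{prop:4.4} (one for the first two factors, one for the third), and two ``vertical'' faces coming from the GIT quotient maps $p_i$ and $\tilde p_i$. First I would record the four constituent commuting squares explicitly: (i) the front-left square $p_i \circ \varphi_i = \tau_i \circ \tilde p_i$ for $i=1,2$ (taken as a product), which is exactly Proposition \ref{prop:4.4}; (ii) the analogous front-right square for $i=3$, again Proposition \ref{prop:4.4}; (iii) the bottom square $p_3 \circ \phi = \bar\phi \circ (p_1\times p_2)$, which is the defining property of $\bar\phi$ recalled just after Remark \ref{rem:3.1}; and (iv) the top square $\tilde p_3 \circ \phi^{\mathrm{cov}} = \tilde\phi \circ (\tilde p_1 \times \tilde p_2)$, where $\phi^{\mathrm{cov}} : \stab{\tilde{\mathcal A}_1}\times \stab{\tilde{\mathcal A}_2} \to \stab{\tilde{\mathcal A}_3}$ is the tensor-product map on covering-quiver representation spaces; this produces $\tilde\phi$ and its square by exactly the same universal-property-of-good-quotient argument used to produce $\bar\phi$, using Proposition \ref{prop:4.1} (and the diagram preceding the Proposition above) to see that $\phi^{\mathrm{cov}}$ lands in the stable locus.

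Second I would verify that the two ``unlabeled equalities'' drawn as $\arrow[dr,equal]$ in the statement are legitimate, i.e. that the representation spaces for $\tilde Q\otimes\tilde{Q'}$ (resp.\ $\widetilde{Q\otimes Q'}$) and the corresponding products/single spaces are canonically identified. This is where the identifications $\varphi : \mathcal A(\tilde Q,\mathbf 1)\to\mathcal A(Q,d)$ and the equalities in the commutative diagram preceding the last Proposition are invoked: one checks that under $\varphi_1\times\varphi_2$ and $\varphi_3$ the map $\phi^{\mathrm{cov}}$ is carried to $\phi$, which is precisely the commutativity of that rectangle. Hence the two ``depth'' edges of the cube really are identities of schemes, and the top horizontal arrow $\tilde\phi$ and bottom arrow $\bar\phi$ are intertwined by the $\tau$'s.

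Third, with squares (i)--(iv) in hand, the commutativity of the whole cube follows by a diagram chase: the top and two side faces commute by construction, so it remains to check that the bottom square commutes \emph{compatibly}, i.e.\ $\tau_3 \circ \tilde\phi = \bar\phi \circ (\tau_1\times\tau_2)$. One does this by precomposing with the surjection $\tilde p_1\times\tilde p_2$ (which it suffices to do since that map is a categorical quotient, hence an epimorphism in the relevant category): using square (iv) the left side becomes $\tau_3\circ\tilde p_3\circ\phi^{\mathrm{cov}}$, which by square (ii) equals $p_3\circ\varphi_3\circ\phi^{\mathrm{cov}}=p_3\circ\phi\circ(\varphi_1\times\varphi_2)$ by the identification of Step 2, which by square (iii) equals $\bar\phi\circ(p_1\times p_2)\circ(\varphi_1\times\varphi_2)$, and finally by square (i) this is $\bar\phi\circ(\tau_1\times\tau_2)\circ(\tilde p_1\times\tilde p_2)$. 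Cancelling the epimorphism gives the desired identity.

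The only genuine obstacle, and the step I would write most carefully, is the construction of $\tilde\phi$ together with its square (iv): one must confirm that $p_3\circ\phi^{\mathrm{cov}}$ is constant on $\bar{\tilde G}_1\times\bar{\tilde G}_2$-orbits and that $\tilde p_1\times\tilde p_2$ is a (geometric, or at least categorical) quotient so that descent applies — this is the same mechanism as for $\bar\phi$ but now one is using the stable loci of covering quivers and the group $T=\Delta'\backslash\prod_{(a,\mu)}\mathbb G_m$, so the $u$-equivariance of $\phi^{\mathrm{cov}}$ with respect to the induced homomorphism $\tilde G_1\times\tilde G_2\to\tilde G_3$ needs to be spelled out from Proposition \ref{prop:4.1}. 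Everything else is formal.
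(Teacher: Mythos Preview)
Your proposal is correct and supplies exactly the bookkeeping that the paper omits: in the paper, Corollary~\ref{cor:4.7} is stated without proof, as an immediate consequence of Proposition~\ref{prop:4.4}, the construction of $\bar\phi$, and the commutative rectangle of representation spaces displayed just before the preceding proposition. Your assembly of the cube from squares (i)--(iv), together with the descent argument producing $\tilde\phi$ and the epimorphism chase verifying $\tau_3\circ\tilde\phi=\bar\phi\circ(\tau_1\times\tau_2)$, is precisely the intended argument made explicit.
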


\subsection{Relation between natural line bundles on the moduli of covering quivers over \C}
Let $k = \C$.  Let $\tilde{F_i}$ $(i = 1,2,3)$ denote the natural line
bundle on the moduli space $\tilde{N_i}$, and
$\tau_i := \tau_{N_i} : \tilde{N_i} \to N_i$ be the morphism as in
Proposition \ref{prop:4.4}. 
Let $u_i : T_i \to \bar{G_i}$ be the corresponding group homomorphism.
Then it can be seen that 
\begin{equation*}
\varphi_i(\tilde{\rho_i}\cdot \tilde{g_i}) =
 \varphi_i(\tilde{\rho_i}) \cdot u_i(\tilde{g_i})
\end{equation*}
for all $\tilde{\rho_i} \in \stab{\tilde{\mathcal{A}_i}}$ and
$\tilde{g_i} \in T_i$.  Moreover the $T_i$-bundle
$\stab{\tilde{\mathcal{A}_i}} \times \C$ and the pullback bundle
$\varphi_i^*(\stab{\mathcal{A}_i} \times \C)$ are isomorphic as
$T_i$-bundles because the characters do not change.  Hence by the
discussion in the beginning of the Section \ref{sec:3}, we get the
following.

\begin{proposition}
	\label{prop:4.5}
The line bundles $\tilde{F}_i$ and $\tau_i^*F_i$ are isomorphic.
\end{proposition}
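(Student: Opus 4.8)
The plan is to recognise Proposition~\ref{prop:4.5} as a direct instance of the descent principle recorded at the beginning of Section~\ref{sec:3}, carried out once for each $i \in \set{1,2,3}$. First I would assemble the required data: take $X = \stab{\tilde{\mathcal{A}_i}}$, $Y = \stab{\mathcal{A}_i}$, the algebraic groups $G = T_i$ and $H = \bar{G_i}$ with the homomorphism $u = u_i$, the map $\phi = \varphi_i$ identifying the stable representation spaces, the good quotients $\tilde{p_i} \colon \stab{\tilde{\mathcal{A}_i}} \to \tilde{N_i}$ and $p_i \colon \stab{\mathcal{A}_i} \to N_i$, and the induced map $\bar{\phi} = \tau_i \colon \tilde{N_i} \to N_i$. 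The commutativity of the resulting square, $p_i \circ \varphi_i = \tau_i \circ \tilde{p_i}$, is precisely Proposition~\ref{prop:4.4}, and the $u_i$-equivariance of $\varphi_i$ is the identity $\varphi_i(\tilde{\rho_i}\cdot \tilde{g_i}) = \varphi_i(\tilde{\rho_i})\cdot u_i(\tilde{g_i})$ noted above.

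Next I would describe both line bundles as descents of twisted trivial bundles: $F_i$ is the descent of $\restrict{(\mathcal{A}_i \times \C)}{\stab{\mathcal{A}_i}}$ with the $G_i$-action twisted by the character $\chi_i$ of Section~\ref{sec:3.2}, while $\tilde{F_i}$ is the descent of $\restrict{(\tilde{\mathcal{A}_i} \times \C)}{\stab{\tilde{\mathcal{A}_i}}}$ with the $T_i$-action twisted by the corresponding character $\tilde{\chi_i}$ of the covering quiver, whose dimension vector is $\mathbf{1}$. By the principle of Section~\ref{sec:3}, $\tau_i^* F_i$ is a descent of $\varphi_i^*\bigl(\restrict{(\mathcal{A}_i \times \C)}{\stab{\mathcal{A}_i}}\bigr)$, and by $u_i$-equivariance this pullback is again the trivial line bundle over $\stab{\tilde{\mathcal{A}_i}}$, now carrying the $T_i$-action twisted by $\chi_i \circ u_i$. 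Thus the whole argument collapses to the single identity $\chi_i \circ u_i = \tilde{\chi_i}$ --- informally, ``the characters do not change'' --- which makes the two $T_i$-bundles coincide, and therefore $\tau_i^* F_i \cong \tilde{F_i}$ by comparing descents.

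The one step carrying content is this character identity, where the work is pure bookkeeping: $u_i$ is induced by the inclusion of the diagonal torus $\prod_{(a,\mu)}\mathbb{G}_m \hookrightarrow G_i$ (reflecting how $\varphi$ reads off the matrix entries of each $\rho_\alpha$), the covering weight satisfies $\tilde{\theta}_{(a,\mu)} = \theta_a$, and from $\rk{\mathbf{1}} = \sum_a d_a = \rk{d}$ and $\deg_{\tilde{\theta}}(\mathbf{1}) = \sum_a d_a\theta_a = \deg_\theta(d)$ one gets $\mu_{\tilde{\theta}}(\mathbf{1}) = \mu_\theta(d)$; splitting the relevant determinants into $1 \times 1$ blocks then gives
\[
\chi_i(u_i(\tilde{g})) = \prod_{a}\Big(\prod_{\mu}\tilde{g}_{(a,\mu)}\Big)^{n(\mu_\theta(d)-\theta_a)} = \prod_{(a,\mu)}\tilde{g}_{(a,\mu)}^{\,n(\mu_{\tilde{\theta}}(\mathbf{1})-\tilde{\theta}_{(a,\mu)})} = \tilde{\chi_i}(\tilde{g}).
\]
Invoking the descent principle then finishes the proof, and the same reasoning runs verbatim in the holomorphic category, as remarked in Section~\ref{sec:3}. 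The only point needing mild care --- the closest thing to an obstacle --- is checking that $\tau_i$ is a genuine morphism of moduli spaces, so that pulling back line bundles along it is legitimate; but this is already built into Proposition~\ref{prop:4.4}.
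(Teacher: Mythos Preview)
Your proposal is correct and follows essentially the same approach as the paper: the paper's argument is precisely to note the $u_i$-equivariance of $\varphi_i$, observe that the pullback of the twisted trivial bundle is again the twisted trivial bundle ``because the characters do not change'', and then invoke the descent principle from the beginning of Section~\ref{sec:3}. You have simply made explicit the character computation $\chi_i \circ u_i = \tilde{\chi_i}$ that the paper leaves as a one-line assertion.
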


\begin{corollary}
The line bundles $\tilde{\phi}^*\tilde{F_3}$ and
$\widetilde{\pr{1}}^*(\tilde{F_1}^{\otimes n'\, \rk{d'}}) \otimes \widetilde{\pr{2}}^*(\tilde{F_2}^{\otimes n \, \rk{d}})$ on $\tilde{N_1} \times \tilde{N_2}$ are isomorphic, where
$\widetilde{\pr{j}} : \tilde{N_1} \times \tilde{N_2} \to \tilde{N_j}$
($j = 1,2$) is the projection map.
\end{corollary}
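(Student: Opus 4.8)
The plan is to deduce this corollary formally from three inputs already established: Proposition \ref{prop:3.1}, which relates $\bar{\phi}^*F_3$ to $F_1$ and $F_2$; Proposition \ref{prop:4.5}, which identifies each $\tilde{F}_i$ with $\tau_i^*F_i$; and the commutative cube of Corollary \ref{cor:4.7}. From that cube one reads off the identity of morphisms
\[
\tau_3 \circ \tilde{\phi} = \bar{\phi} \circ (\tau_1 \times \tau_2),
\]
and, since $\tau_1 \times \tau_2$ is a product of maps, one also has $\pr{j} \circ (\tau_1 \times \tau_2) = \tau_j \circ \widetilde{\pr{j}}$ for $j = 1,2$. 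These are the only geometric ingredients.

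The computation then proceeds as follows. Using $\tilde{F}_3 \cong \tau_3^*F_3$ from Proposition \ref{prop:4.5} together with the displayed identity of morphisms,
\[
\tilde{\phi}^*\tilde{F}_3 \;\cong\; (\tau_3 \circ \tilde{\phi})^*F_3 \;=\; (\tau_1 \times \tau_2)^*\bigl(\bar{\phi}^*F_3\bigr).
\]
Substituting the isomorphism of Proposition \ref{prop:3.1} and using that pullback commutes with tensor products and tensor powers,
\[
\tilde{\phi}^*\tilde{F}_3 \;\cong\; (\tau_1 \times \tau_2)^*\pr{1}^*\bigl(F_1^{\otimes n'\,\rk{d'}}\bigr) \;\otimes\; (\tau_1 \times \tau_2)^*\pr{2}^*\bigl(F_2^{\otimes n\,\rk{d}}\bigr).
\]
Finally, rewriting $(\tau_1 \times \tau_2)^*\pr{j}^* = \widetilde{\pr{j}}^*\tau_j^*$ via $\pr{j} \circ (\tau_1 \times \tau_2) = \tau_j \circ \widetilde{\pr{j}}$, and applying Proposition \ref{prop:4.5} once more in the form $\tau_j^*F_j \cong \tilde{F}_j$, one obtains
\[
\tilde{\phi}^*\tilde{F}_3 \;\cong\; \widetilde{\pr{1}}^*\bigl(\tilde{F}_1^{\otimes n'\,\rk{d'}}\bigr) \otimes \widetilde{\pr{2}}^*\bigl(\tilde{F}_2^{\otimes n\,\rk{d}}\bigr),
\]
as desired.

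Since everything above is a diagram chase with the results of Corollary \ref{cor:4.7} and Propositions \ref{prop:3.1} and \ref{prop:4.5} in hand, I expect no serious obstacle; the only point deserving a line of justification is that the exponents $n'\,\rk{d'}$ and $n\,\rk{d}$ survive the passage to covering quivers. This holds because the covering quiver of $Q'$ has dimension vector $\mathbf{1}$, so its rank equals $\sum_{a'} d_{a'}' = \rk{d'}$, and its weight $\tilde{\theta'}$ satisfies $\tilde{\theta'}_{(a',\mu')} = \theta'_{a'}$ with $\mu_{\tilde{\theta'}}(\mathbf{1}) = \mu_{\theta'}(d')$, so the very same integer $n'$ appearing in Proposition \ref{prop:3.1} serves for the covering quiver; the same applies to $n$ and $Q$. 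If one prefers not to treat Corollary \ref{cor:4.7} as a black box, the identity $\tau_3 \circ \tilde{\phi} = \bar{\phi} \circ (\tau_1 \times \tau_2)$ can be checked directly on the stable loci: pre-composing either side with the surjective quotient map $\tilde{p}_1 \times \tilde{p}_2$ and using $p_i \circ \varphi_i = \tau_i \circ \tilde{p}_i$ from Proposition \ref{prop:4.4}, the defining property of $\bar{\phi}$, and the compatibility of the tensor maps on the covering representation spaces, both sides are seen to coincide with $p_3 \circ \phi \circ (\varphi_1 \times \varphi_2)$.
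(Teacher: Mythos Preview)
Your proof is correct and follows essentially the same route as the paper's: both derive the result by combining Proposition~\ref{prop:3.1}, Proposition~\ref{prop:4.5}, and the commutativity from Corollary~\ref{cor:4.7}, together with the elementary identity $\pr{j}\circ(\tau_1\times\tau_2)=\tau_j\circ\widetilde{\pr{j}}$; the paper simply runs the chain of isomorphisms in the opposite direction. Your closing paragraph about the exponents is unnecessary, since $n,n',\rk{d},\rk{d'}$ are inherited directly from pulling back Proposition~\ref{prop:3.1} along $\tau_1\times\tau_2$ rather than being rederived on the covering side, but the remark is harmless.
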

 
\proof We have
\begin{equation*}
\pr{i} \circ (\tau_1 \times \tau_2) = \tau_i \circ \widetilde{\pr{i}}
\end{equation*}
for $i = 1,2$.  Therefore
\begin{equation*}
(\tau_1 \times \tau_2)^*(\pr{i}^*F_i) = (\pr{i} \circ (\tau_1 \times \tau_2))^*F_i = (\tau_i \circ \widetilde{\pr{i}})^*F_i = \widetilde{\pr{i}}^*(\tau_i^*F_i) = \widetilde{\pr{i}}^*\tilde{F_i}.
\end{equation*}
Taking tensor products, and using Proposition \ref{prop:3.1} and Corollary \ref{cor:4.7} we get
\begin{align*}
\widetilde{\pr{1}}^*(\tilde{F_1}^{\otimes n'\, \rk{d'}}) \otimes \widetilde{\pr{2}}^*(\tilde{F_2}^{\otimes n \, \rk{d}}) &= (\tau_1 \times \tau_2)^*(\pr{1}^*F_1^{\otimes n'\, \rk{d'}} \otimes \pr{2}^*F_2^{\otimes n \, \rk{d}})\\
&= (\tau_1 \times \tau_2)^*\bar{\phi}^*F_3 = (\tau_3 \circ \tilde{\phi})^*F_3 = \tilde{\phi}^* \tau_3^* F_3 = \tilde{\phi}^* \tilde{F_3.} \qed
\end{align*}
 
\subsection*{Acknowledgment} A part of this manuscript constitutes a
chapter of the first author's Ph.D. thesis written at the Harish-Chandra
Research Institute, Prayagraj where he was a student. Some part of this
manuscript was written when the first author was a postdoctoral fellow at
the Tata Institute of Fundamental Research, Mumbai. He would like to
extend his gratitude to these two institutes.  We thank Yiqiang Li for
informing us about his work related to product valued quiver. We also
thank the anonymous referees for their constructive comments which led
to the improvements in the manuscript.

%

\end{document}